\documentclass[12pt]{amsart}
\usepackage{amsmath,amsthm,amssymb}
\usepackage{graphicx}
\usepackage{subfig}

\textwidth=455pt \evensidemargin=8pt \oddsidemargin=8pt
\marginparsep=8pt \marginparpush=8pt \textheight=640pt
\topmargin=-20pt

\DeclareMathOperator{\RE}{Re} \DeclareMathOperator{\IM}{Im}

\numberwithin{equation}{section}
\newtheorem{theorem}{Theorem}[section]
\newtheorem{lemma}[theorem]{Lemma}
\newtheorem{corollary}[theorem]{Corollary}
\theoremstyle{remark}
\newtheorem{remark}[theorem]{Remark}
\newtheorem{example}[theorem]{Example}
\newtheorem{definition}[theorem]{Definition}

 \begin{document}

\title{Fully Starlike and Convex Harmonic Mappings of order $\alpha$}

\thanks{The research work of the first author is supported by research fellowship from Council of Scientific and
Industrial Research (CSIR), New Delhi.}

\author[S. Nagpal]{Sumit Nagpal}
\address{Department of Mathematics, University of Delhi,
Delhi--110 007, India}
\email{sumitnagpal.du@gmail.com }

\author[V. Ravichandran]{V. Ravichandran}

\address{Department of Mathematics, University of Delhi,
Delhi--110 007, India \and
School of Mathematical Sciences,
Universiti Sains Malaysia, 11800 USM, Penang, Malaysia}
\email{vravi68@gmail.com}

\begin{abstract}
The hereditary property of convexity and starlikeness for conformal mappings does not generalize to univalent harmonic mappings. This failure leads us to the notion of fully starlike and convex mappings of order $\alpha$, $(0\leq \alpha<1)$. A bound for the radius of fully starlikeness and fully convexity of order $\alpha$ is determined for certain families of univalent harmonic mappings. Convexity is not preserved under the convolution of univalent harmonic convex mappings, unlike in the analytic case. Given two univalent harmonic convex mappings $f$ and $g$, the problem of finding the radius $r_{0}$ such that $f*g$ is a univalent harmonic convex mapping in $|z|<r_{0}$, is being considered.
\end{abstract}

\keywords{harmonic mappings, convolution, convex and starlike functions.}

\subjclass[2010]{30C80}

 \maketitle

\section{Introduction}
Let $\mathcal{H}$ denote the class of all complex-valued harmonic functions $f$ in the unit disk $\mathbb{D}=\{z \in \mathbb{C}:|z|<1\}$ normalized by $f(0)=0=f_{z}(0)-1$. Let $\mathcal{S}_{H}$ be the subclass of $\mathcal{H}$ consisting of univalent and sense-preserving functions. Such functions can be written in the form $f=h+\bar{g}$, where
\begin{equation}\label{eq1.1}
h(z)=z+\sum_{n=2}^{\infty}a_{n}z^{n}\quad\mbox{and}\quad g(z)=\sum_{n=1}^{\infty}b_{n}z^n
\end{equation}
are analytic and $|g'(z)|<|h'(z)| $ in $\mathbb{D}$. It follows that $|b_{1}|<1$ and hence the function $(f-\overline{b_{1}f})/(1-|b_{1}|^2)$ belongs to $\mathcal{S}_{H}$. Thus we may restrict our attention to the subclass
\[\mathcal{S}_{H}^{0}=\{f \in \mathcal{S}_{H}:f_{\bar{z}}(0)=0\}.\]
Observe that $\mathcal{S}_{H}$ reduces to $\mathcal{S}$, the class of normalized univalent analytic functions, if the co-analytic part of $f$ is zero. In $1984$, Clunie and Sheil-Small (see \cite{cluniesheilsmall}) investigated the class $\mathcal{S}_{H}$ as well as its geometric subclasses and obtained some coefficient bounds. Since then, there have been several related papers on $\mathcal{S}_{H}$ and its subclasses.

Let $\mathcal{S}_{H}^{*}$, $\mathcal{K}_{H}$ and $\mathcal{C}_{H}$ be the subclasses of $\mathcal{S}_{H}$ mapping $\mathbb{D}$ onto starlike, convex and close-to-convex domains, respectively, just as $\mathcal{S}^{*}$, $\mathcal{K}$ and $\mathcal{C}$ are the subclasses of $\mathcal{S}$ mapping $\mathbb{D}$ onto their respective domains. Denote by $\mathcal{S}_{H}^{*0}$, $\mathcal{K}_{H}^{0}$ and $\mathcal{C}_{H}^{0}$, the class consisting of those functions $f$ in $\mathcal{S}_{H}^{*}$, $\mathcal{K}_{H}$ and $\mathcal{C}_{H}$ respectively, for which $f_{\bar{z}}(0)=0$.

In \cite{cluniesheilsmall}, Clunie and Sheil-Small conjectured that if $f=h+\bar{g} \in \mathcal{S}_{H}^{0}$ then the Taylor coefficients of the series of $h$ and $g$ satisfy the inequality
\begin{equation}\label{eq1.2}
|a_{n}|\leq\frac{1}{6}(2n+1)(n+1)\quad\mbox{and}\quad|b_{n}|\leq \frac{1}{6}(2n-1)(n-1),\quad \mbox{for all }n \geq 1.
\end{equation}
They verified this conjecture for typically real functions. Later, Sheil-Small \cite{sheilsmall} proved it for all functions $f \in \mathcal{S}_{H}^{0}$ for which $f(\mathbb{D})$ is starlike with respect to the origin or $f(\mathbb{D})$ is convex in one direction. In \cite{wang}, Wang, Liang  and Zhang verified this conjecture for close-to-convex functions in $\mathcal{S}_{H}^{0}$. However, this coefficient conjecture remains an open problem for the full class $\mathcal{S}_{H}^{0}$. Equality occurs in \eqref{eq1.2} for the harmonic Koebe function
\begin{equation}\label{eq1.3}
K(z)=\frac{z-\frac{1}{2}z^2+\frac{1}{6}z^3}{(1-z)^3}+\overline{\frac{\frac{1}{2}z^2+\frac{1}{6}z^3}{(1-z)^3}},
\end{equation}
constructed by shearing the Koebe function $k(z)=z/(1-z)^2$ horizontally with dilatation $w(z)=z$. Note that $K$ maps the unit disk $\mathbb{D}$ onto the slit-plane $\mathbb{C}\backslash(-\infty,-1/6]$.

Regarding the coefficient bounds for functions in $\mathcal{S}_{H}^{0}$ mapping $\mathbb{D}$ onto a convex domain, Clunie and Sheil-Small \cite{cluniesheilsmall} proved that the Taylor coefficients of the series of $h$ and $g$ of a function $f \in \mathcal{K}_{H}^{0}$ satisfy the inequalities
\begin{equation}\label{eq1.4}
|a_{n}|\leq\frac{n+1}{2}\quad\mbox{and}\quad|b_{n}|\leq\frac{n-1}{2},\quad \mbox{for all }n \geq 1.
\end{equation}
Equality occurs for the harmonic half-plane mapping
\begin{equation}\label{eq1.5}
L(z)=\RE \left(\frac{z}{1-z}\right)+i \IM\left(\frac{z}{(1-z)^2}\right),
\end{equation}
constructed by shearing the conformal mapping $l(z)=z/(1-z)$ vertically with dilatation $w(z)=-z$.

It's worth to recall that convexity is a hereditary property for conformal mappings. In other words, if an analytic function maps the unit disk univalently onto a convex domain, then it also maps each concentric subdisk onto a convex domain. However, this hereditary property does not generalize to harmonic mappings. The harmonic half-plane mapping $L$ given by \eqref{eq1.5} sends the subdisk $|z|<r$ onto a convex region for $r \leq \sqrt{2}-1$, but onto a nonconvex region for $\sqrt{2}-1<r<1$. In fact, it has been proved that if a function $f$ maps the unit disk harmonically onto a convex domain, then for each radius $r \leq \sqrt{2}-1$ it again maps the disk $|z|<r$ onto a convex domain, but it need not do so for any radius in the interval $\sqrt{2}-1<r<1$ (see \cite{ruscheweyh,sheilsmall}).

In the same sense, starlikeness is a hereditary property for conformal mappings which does not generalize to harmonic mappings. This is seen by the following example.

\begin{example}\label{ex1.1}
Consider the harmonic half-plane mapping $L$ defined by \eqref{eq1.5}. Note that $L$ maps the unit disk $\mathbb{D}$ onto the half-plane $\RE \{w\}>-1/2$. We shall show that $L$ sends each disk $|z|<r\leq r_{0}$ to a starlike region, but the image is not starlike when $r_{0}<r<1$, where $r_{0}$ is given by
\[r_{0}=\sqrt{\frac{7\sqrt{7}-17}{2}}\approx0.871854.\]

We employ a similar calculation carried out in \cite[Section 3.5]{duren} to determine the radius of convexity for $L$. For this, it will be customary to study the change of the direction $\Psi_{r}(\theta)=\arg L(r e^{i\theta})$ of the image curve as the point $z=r e^{i\theta}$ moves around the circle $|z|=r$. A direct calculation gives
\[L(r e^{i\theta})=A(r,\theta)+iB(r,\theta),\]
where
\[|1-z|^2A(r,\theta)=r(\cos\theta-r)\quad \mbox{and}\quad |1-z|^4B(r,\theta)=r(1-r^2)\sin\theta.\]

\begin{figure}[hb]
\centering
\includegraphics[width=2.5in]{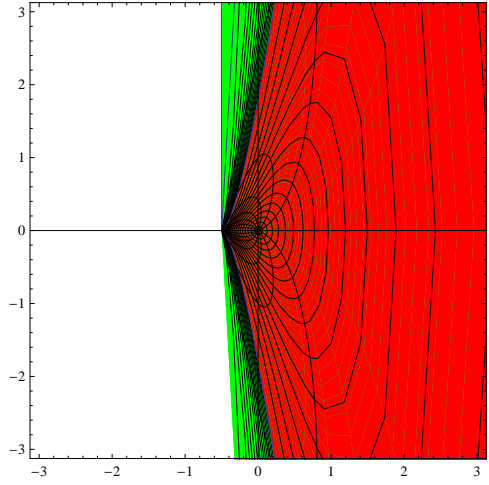}
\caption{Image of the subdisk $|z|<\sqrt{(7\sqrt{7}-17)/2}$ under the mapping $L$.}\label{fig1}
\end{figure}

The problem now reduces to finding the values of $r$ such that the argument of $L(r e^{i\theta})$ is a nondecreasing function of $\theta$. Writing $L(z)=h(z)+\overline{g(z)}$, observe that
\[\left.\frac{\partial}{\partial \theta}\Psi_{r}(\theta)\right|_{\cos \theta =r}=\left.\RE\left(\frac{zh'(z)-\overline{zg'(z)}}{h(z)+\overline{g(z)}}\right)\right|_{\cos \theta =r}=1.\]
If $\cos \theta \neq r$, then we shall show that
\[\tan \Psi_{r}(\theta)=\frac{B(r,\theta)}{A(r,\theta)}=\frac{(1-r^2)\sin\theta}{(1-2r\cos\theta+r^2)(\cos\theta-r)}\]
is a nondecreasing function of $\theta$. A straightforward calculation leads to an expression for the derivative in the form
\[(1-2ur+r^2)^2(u-r)^2 \frac{\partial}{\partial \theta}\tan \Psi_{r}(\theta)=(1-r^2)p(r,u),\]
where $u=\cos\theta$ and
\[p(r,u)=1+u(2u^2-5)r+3r^2-ur^3.\]

The problem is now to find the values of the parameter $r$ for which the polynomial $p(r,u)$ is non-negative in the whole interval $-1\leq u \leq 1$. Observe that
\[p(r,-1)=(1+r)^3>0\quad\mbox{and}\quad p(r,1)=(1-r)^3>0.\]
Also, differentiation gives
\[\frac{\partial}{\partial u}p(r,u)=(6u^2-5)r-r^3,\]
showing that $p(r,u)$ has a local minimum at $u=\sqrt{(5+r^2)/6}$ and a local maximum at $u=-\sqrt{(5+r^2)/6}$. Thus $p(r,u)\geq 0$ for $-1\leq u\leq 1$ if and only if
\[p\left(r,\sqrt{\frac{5+r^2}{6}}\right)=\frac{1}{9}[9-5\sqrt{6}r\sqrt{5+r^2}+27r^2-\sqrt{6}r^3\sqrt{5+r^2}] \geq 0.\]
This inequality implies that $r\leq r_{0}=\sqrt{(7\sqrt{7}-17)/2}$. This proves that the angle $\Psi_{r}(\theta)$ increases monotonically with $\theta$ if $r\leq r_{0}$ and hence the harmonic half-plane mapping $L$ sends each disk $|z|<r \leq r_{0}$ to a starlike region, but the image is not starlike when $r_{0}<r<1$ (see Figure \ref{fig1}).
\end{example}

The failure of the hereditary property for starlike and convex harmonic mappings led to the notion of fully starlike and fully convex functions, which is being discussed in \cite{chuaqui}. For $0 \leq\alpha <1$, the concept of fully starlike functions of order $\alpha$ and fully convex functions of order $\alpha$ is introduced in section \ref{sec2}, analogous to the subclasses $\mathcal{S}^{*}(\alpha)$ and $\mathcal{K}(\alpha)$ of $\mathcal{S}$, in the analytic case, consisting of respectively starlike functions of order $\alpha$ and convex functions of order $\alpha$. Recall that these classes are defined analytically by the equivalence
\begin{equation}\label{eq1.6}
f \in \mathcal{S}^{*}(\alpha) \Leftrightarrow \RE\left(\frac{zf'(z)}{f(z)}\right)>\alpha\quad \mbox{and}\quad f \in \mathcal{K}(\alpha) \Leftrightarrow \RE\left(\frac{zf''(z)}{f'(z)}+1\right)>\alpha.
\end{equation}

Let $\mathcal{F}$ be the family of all functions of the form $f=h+\overline{g}$ where $h$ and $g$ are given by \eqref{eq1.1}. In \cite{ponnusamy}, it has been proved that the radius of univalence and starlikeness of the family $\mathcal{F}$ is given by
\[r_{0}=1+\frac{1}{2\sqrt{2}}-\sqrt{\sqrt{2}+\frac{1}{8}}\approx0.112903,\]
if the coefficients of the series satisfy conditions \eqref{eq1.2}. A similar calculation is carried out if the coefficients of the series satisfy \eqref{eq1.4}. In this case, the radius of univalence and starlikeness of $\mathcal{F}$ comes out to be $s_{0}$, given by
\[s_{0}=1+\frac{\sqrt[3]{-18+\sqrt{330}}}{6^{2/3}}-\frac{1}{\sqrt[3]{6(-18+\sqrt{330})}}\approx0.164878.\]
These results are generalized in context of fully starlike and fully convex functions of order $\alpha$ $(0<\alpha\leq 1)$ in section \ref{sec3}. The results, in turn, provide a bound for the radius of fully starlikeness (resp. fully convexity) of order $\alpha$, for the convex, starlike and close-to-convex mappings in $S_{H}$.

For analytic functions $f(z)=z+\sum_{n=2}^{\infty}a_{n}z^{n}$ and $F(z)=z+\sum_{n=2}^{\infty}A_{n}z^{n}$, their convolution (or Hadamard product) is defined as
\[f*F=z+\sum_{n=2}^{\infty}a_{n}A_{n}z^{n}.\]
In the harmonic case, with
\begin{align*}
f&=h+\bar{g}=z+\sum_{n=2}^{\infty}a_{n}z^{n}+\overline{\sum_{n=1}^{\infty}b_{n}z^{n}},\quad \mbox{and}\\
F&=H+\bar{G}=z+\sum_{n=2}^{\infty}A_{n}z^{n}+\overline{\sum_{n=1}^{\infty}B_{n}z^{n}}.
\end{align*}
their harmonic convolution is defined as
\[f*F=h*H+\overline{g*G}=z+\sum_{n=2}^{\infty}a_{n}A_{n}z^{n}+\overline{\sum_{n=1}^{\infty}b_{n}B_{n}z^{n}}.\]
There have been many results about harmonic convolutions (see \cite{cluniesheilsmall,dorff1,dorff2,goodloe,ruscheweyh}). For the convolution of analytic functions, if $f_{1}$, $f_{2} \in \mathcal{K}$, then $f_{1}*f_{2} \in \mathcal{K}$. However, it is easy to see that the Hadamard product of two functions in $\mathcal{K}_{H}$ is not necessarily convex, or even univalent. In Section \ref{sec4}, the radius of univalence of the family
\[\mathcal{G}=\left\{f=h+\bar{g} \in \mathcal{H}:|a_{n}|\leq\left(\frac{n+1}{2}\right)^{2}\, \mbox{ and }\, |b_{n}|\leq\left(\frac{n-1}{2}\right)^{2}\, \mbox{ for } \, n\geq 1\right\}\]
is determined, which turns out to be $r_{0}\approx 0.129831$. This number is also the radius of starlikeness of $\mathcal{G}$. The radius of convexity of $\mathcal{G}$ is $s_{0}\approx0.0712543$. This, in particular, shows that if $f$, $g \in \mathcal{K}_{H}^{0}$, then $f*g$ is univalent and convex in at least $|z|<s_{0}\approx 0.0712543$.


\section{Fully Starlikeness and Convexity of order $\alpha$}\label{sec2}
 In this section, we will introduce the concept of fully starlike functions of order $\alpha$ $(0\leq \alpha <1)$ and fully convex functions of order $\alpha$  and give some of their basic properties with illustrations.

\begin{definition}\label{def2.1}
A harmonic mapping $f$ of the unit disk $\mathbb{D}$ is said to be \textbf{fully convex of order $\alpha$} $(0\leq \alpha <1)$ if it maps every circle $|z|=r<1$ in a one-to-one manner onto a convex curve satisfying
\begin{equation}\label{eq2.1}
\frac{\partial}{\partial \theta}\left(\arg \left\{\frac{\partial}{\partial \theta}f(r e^{i \theta})\right\}\right)> \alpha,\quad 0\leq\theta\leq 2\pi,\quad 0<r<1.
\end{equation}
If $\alpha=0$, then $f$ is said to be \emph{fully convex}.
\end{definition}

According to the Rad\'{o}-Kneser-Choquet theorem, a fully convex harmonic mapping of order $\alpha$ $(0\leq\alpha<1)$ is necessarily univalent in $\mathbb{D}$. The affine mappings $f(z)=\alpha z+\gamma+\beta \overline{z}$ with $|\alpha|>|\beta|$, are fully convex of order $(|\alpha|-|\beta|)/(|\alpha|+|\beta|)$. If $f \in \mathcal{K}_{H}$, then $f$ is fully convex in $|z|<\sqrt{2}-1$ with the extremal function as the harmonic half-plane mapping $L$ defined by \eqref{eq1.5} (see \cite{ruscheweyh}). Similarly, $f$ is fully convex in $|z|<3-\sqrt{8}$ if $f \in \mathcal{S}_{H}^{*}$ or $\mathcal{C}_{H}$ and the harmonic Koebe function $K$ given by \eqref{eq1.3} shows that this bound is best possible (see \cite{sheilsmall}). However, the exact radius of fully convexity of order $\alpha$ ($0<\alpha<1$) for starlike, convex and close-to-convex mappings in $\mathcal{S}_{H}$ is still unsolved (see Section \ref{sec3}).

It's a worth to remark that the condition \eqref{eq2.1} is sufficient but not necessary for a function $f \in \mathcal{S}_{H}$ to map $\mathbb{D}$ onto a convex domain (see \cite[Theorem 3]{mocanu}). The next theorem provides a sufficient condition for a sense-preserving harmonic mapping to be fully convex.
\begin{theorem}\label{convex}
A sense-preserving harmonic function $f=h+\bar{g}$ is fully convex in $\mathbb{D}$ if the analytic functions $h+\epsilon g$ are convex in $\mathbb{D}$ for each $|\epsilon|=1$.
\end{theorem}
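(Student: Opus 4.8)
My plan is to verify fully convexity through its equivalent \emph{directional} formulation: a Jordan domain is convex if and only if it is convex in every direction. Accordingly I would fix $r\in(0,1)$ and a real angle $\gamma$, and examine the behaviour of the linear functional $\RE\{e^{-i\gamma}f(re^{i\theta})\}$ as $\theta$ runs over $[0,2\pi)$. The decisive observation is the identity
\begin{align*}
\RE\{e^{-i\gamma}f(re^{i\theta})\}&=\RE\{e^{-i\gamma}h(re^{i\theta})\}+\RE\{e^{i\gamma}g(re^{i\theta})\}\\
&=\RE\{e^{-i\gamma}(h+e^{2i\gamma}g)(re^{i\theta})\},
\end{align*}
obtained by writing $f=h+\overline{g}$ and using $\RE\{e^{-i\gamma}\overline{g}\}=\RE\{e^{i\gamma}g\}$. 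Thus the support of the image of $|z|=r$ under $f$ in the direction $\gamma$ is governed \emph{pointwise in $\theta$} by the analytic function $F_{\gamma}:=h+e^{2i\gamma}g=h+\epsilon g$ with $\epsilon=e^{2i\gamma}$.

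Next I would invoke the hypothesis. For the chosen $\gamma$ the function $F_{\gamma}=h+\epsilon g$ is analytic and convex in $\mathbb{D}$, and since analytic convexity is a hereditary property it maps each circle $|z|=r$ one-to-one onto a convex Jordan curve. For such a curve every line meets it in at most two points, so $\theta\mapsto\RE\{e^{-i\gamma}F_{\gamma}(re^{i\theta})\}$ attains its maximum and its minimum exactly once on $[0,2\pi)$. By the identity above the same unimodality holds for $\theta\mapsto\RE\{e^{-i\gamma}f(re^{i\theta})\}$, which is precisely the statement that the image of $|z|=r$ under $f$ is convex in the direction $\gamma$. Letting $\gamma$ range over $[0,\pi)$ exhausts all directions, and I would conclude that $f$ carries each circle $|z|=r$ onto a curve bounding a convex region.

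To match Definition \ref{def2.1} exactly, two points remain. First, the boundary map must be one-to-one: the sense-preserving hypothesis gives $\partial_{\theta}f=i(zh'-\overline{zg'})\neq0$, and together with the convexity of the image curve (whose tangent has total turning $2\pi$) this forces injectivity on $|z|=r$, after which the Rad\'o--Kneser--Choquet theorem yields univalence of $f$ in $\mathbb{D}$. Second, \eqref{eq2.1} asks for the \emph{strict} inequality $(\partial/\partial\theta)\arg\{\partial_{\theta}f\}>0$; this I would obtain from the \emph{strict} convexity of each analytic $F_{\gamma}$ on $|z|=r<1$, i.e.\ $\RE\{1+zF_{\gamma}''/F_{\gamma}'\}>0$, which makes every support functional strictly unimodal and hence forces the argument of the tangent to increase strictly. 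Carrying this out for all $r<1$ completes the proof.

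The principal obstacle is the passage from the directional description back to the infinitesimal condition \eqref{eq2.1}, and it is instructive to see why a purely local computation does not suffice. One computes
\[
\frac{\partial}{\partial\theta}\arg\left\{\frac{\partial}{\partial\theta}f(re^{i\theta})\right\}=\RE\left(\frac{(zh'+z^{2}h'')+\overline{(zg'+z^{2}g'')}}{zh'-\overline{zg'}}\right),
\]
while the convexity of $F_{\epsilon}=h+\epsilon g$ is the statement $\RE\{((zh'+z^{2}h'')+\epsilon(zg'+z^{2}g''))/(zh'+\epsilon zg')\}>0$. No single unimodular $\epsilon$ reproduces the conjugated numerator and denominator appearing in the first expression simultaneously, so the fully convex inequality is \emph{not} a pointwise consequence of the convexity of the $F_{\epsilon}$ at that one value of $z$. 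It is exactly this feature that forces one to use the hypothesis for \emph{all} $\epsilon$ at once, which the support-function argument accomplishes cleanly by converting ``convex in every direction'' into unimodality of every linear functional.
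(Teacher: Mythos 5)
Your proposal is correct in substance but takes a genuinely different route from the paper. The paper's proof is essentially a citation: analytic convexity is hereditary, so each $h+\epsilon g$ is convex in every subdisk $|z|<r_{0}$, and the conclusion that $f$ is then convex there is delegated wholesale to \cite[Theorem 5.7]{cluniesheilsmall}. You keep the hereditary reduction but open the black box, proving the implication directly via the identity $\RE\{e^{-i\gamma}f(re^{i\theta})\}=\RE\{e^{-i\gamma}(h+e^{2i\gamma}g)(re^{i\theta})\}$; this is the shear/directional-convexity mechanism underlying the Clunie--Sheil-Small theorem and the ``stable convexity'' results of \cite{stable} that the paper invokes for the starlike analogue (Theorem \ref{starlike}). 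Your version is self-contained and, in the closing paragraph, correctly explains why \eqref{eq2.1} cannot be obtained pointwise from any single $\epsilon$, which is genuinely illuminating; the paper's version buys brevity. Two steps in your sketch need reordering or filling in: (i) deduce \emph{first}, from the unimodality of every functional together with $\partial_{\theta}f=i(zh'-\overline{zg'})\neq 0$, that $\arg\partial_{\theta}f$ takes each value mod $\pi$ exactly twice and hence is monotone with total turning $2\pi$, and only \emph{then} conclude that the image of $|z|=r$ is a simple closed convex curve --- as written, simplicity is inferred from convexity of a curve not yet known to be simple; (ii) the strictness in \eqref{eq2.1} does follow as you claim, but it requires the explicit computation $u_{\gamma}''(\theta_{0})=-\IM\{e^{-i\gamma}\partial_{\theta}f(\theta_{0})\}\cdot\partial_{\theta}\arg\{\partial_{\theta}f(re^{i\theta})\}|_{\theta_{0}}$ at a critical point $\theta_{0}$ of $u_{\gamma}(\theta)=\RE\{e^{-i\gamma}f(re^{i\theta})\}$, combined with $u_{\gamma}\equiv\RE\{e^{-i\gamma}F_{\gamma}\}$ and the strict convexity of $F_{\gamma}$ --- a point the paper's own citation-based proof does not address at all.
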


\begin{proof}
To prove the assertion, it suffices to show that $f$ is convex in $|z|<r$ for each $r\leq 1$. To see this, fix $r_0 \in (0,1]$. Then the analytic functions $h+\epsilon g$ are convex in $|z|<r_0$. By \cite[Theorem 5.7]{cluniesheilsmall}, it follows that $f$ is convex in $|z|<r_0$.
\end{proof}

However, if $f=h+\bar{g}$ is fully convex then the functions $h+\epsilon g$ need not be convex for each $|\epsilon|=1$. For this, consider the function $F(z)=L((\sqrt{2}-1)z)$, $z \in \mathbb{D}$, where $L$ is given by \eqref{eq1.5}. Writing $F=H+\bar{G}$, we see that $H-G=k((\sqrt{2}-1)z)$ which is not convex, $k$ being the Koebe function.

Let $\mathcal{FK}_{H}(\alpha)$ $(0\leq\alpha<1)$ denote the subclass of $\mathcal{K}_{H}$ consisting of fully convex functions of order $\alpha$, with $\mathcal{FK}_{H}:=\mathcal{FK}_{H}(0)$ and let $\mathcal{FK}_{H}^{0}(\alpha)=\mathcal{FK}_{H}(\alpha) \cap \mathcal{K}_{H}^{0}$. In terms of the coefficients, Jahangiri \cite{jahangiriconvex} gave a sufficient condition for functions $f \in \mathcal{H}$ to be in $\mathcal{FK}_{H}(\alpha)$.

\begin{lemma}\cite{jahangiriconvex}\label{lem2.2}
Let $f=h+\bar{g}$, where $h$ and $g$ are given by \eqref{eq1.1}. Furthermore, let
\[\sum_{n=2}^{\infty}\frac{n(n-\alpha)}{1-\alpha}|a_{n}|+\sum_{n=1}^{\infty}\frac{n(n+\alpha)}{1-\alpha}|b_{n}|\leq 1\]
and $0\leq \alpha<1$. Then $f \in \mathcal{FK}_{H}(\alpha)$.
\end{lemma}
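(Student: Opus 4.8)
The plan is to reduce the geometric condition in Definition \ref{def2.1} to a pointwise inequality on the real part of an explicit quotient, and then to verify that inequality from the coefficient hypothesis by a \emph{sharp} triangle-inequality estimate.

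First I would compute the two angular derivatives directly. Writing $z=re^{i\theta}$ and using $\partial z/\partial\theta=iz$, a chain-rule computation gives $\partial_\theta f=i\bigl(zh'-\overline{zg'}\bigr)$ and $\partial_{\theta\theta}f=-\bigl(z(zh')'+\overline{z(zg')'}\bigr)$. Since $\partial_\theta\bigl(\arg\partial_\theta f\bigr)=\IM\bigl(\partial_{\theta\theta}f/\partial_\theta f\bigr)$ wherever $\partial_\theta f\neq0$, and because $\IM(iW)=\RE(W)$, this yields the identity
\[
\frac{\partial}{\partial\theta}\arg\Bigl\{\frac{\partial}{\partial\theta}f(re^{i\theta})\Bigr\}=\RE\left\{\frac{z(zh')'+\overline{z(zg')'}}{zh'-\overline{zg'}}\right\}.
\]
Writing $A$ and $B$ for the numerator and denominator of this quotient, condition \eqref{eq2.1} reads exactly $\RE\{A/B\}>\alpha$ on $\mathbb{D}$; as a sanity check it specializes to the classical $\RE(1+zh''/h')>\alpha$ when $g\equiv0$.

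Next I would convert $\RE\{A/B\}>\alpha$ into a modulus inequality. Using $\RE\{W\}>0\iff|W+1|>|W-1|$ with $W=(A-\alpha B)/B$, the target becomes $|A+(1-\alpha)B|>|A-(1+\alpha)B|$. The reason for choosing this particular combination (rather than the cruder $|A-B|<(1-\alpha)|B|$) is that it produces the \emph{sharp} coefficient weights: expanding the series gives
\[
A\pm cB=(1\pm c)z+\sum_{n\geq 2}n(n\pm c)a_nz^n+\overline{\sum_{n\geq 1}n(n\mp c)b_nz^n},
\]
so taking $c=1-\alpha$ on one side and $c=-(1+\alpha)$ on the other, and noting that all the coefficients $n-1+\alpha$ and $n-1-\alpha$ appearing are nonnegative on the relevant ranges, I can lower-bound $|A+(1-\alpha)B|$ and upper-bound $|A-(1+\alpha)B|$ termwise. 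After cancellation the required inequality collapses (with $r=|z|$) to
\[
(1-\alpha)r>\sum_{n\geq 2}n(n-\alpha)|a_n|r^{n}+\sum_{n\geq 1}n(n+\alpha)|b_n|r^{n}.
\]
Dividing by $r$ and using $r^{n-1}\leq1$ for $0<r<1$, this is implied by the hypothesis $\sum n(n-\alpha)|a_n|+\sum n(n+\alpha)|b_n|\leq1-\alpha$, which is precisely the stated bound.

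Finally I would assemble the conclusion. The same modulus inequality forces $B\neq0$, hence $\partial_\theta f\neq0$, so $\arg\partial_\theta f$ is well defined and, by $\RE\{A/B\}>\alpha\geq0$, increases strictly with $\theta$; the coefficient bound also makes $f$ sense-preserving (since $n(n-\alpha)\geq1$ for $n\geq2$ and $n(n+\alpha)\geq1$ for $n\geq1$ give $\sum_{n\ge2}n|a_n|+\sum_{n\ge1}n|b_n|\leq1-\alpha$, whence $|h'|-|g'|\geq\alpha\geq0$), so a standard winding/argument-principle computation shows the total turning over one revolution is exactly $2\pi$ and each circle $|z|=r$ is carried one-to-one onto a convex curve, giving $f\in\mathcal{FK}_{H}(\alpha)$. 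The main obstacle is the sharp reformulation in the third step: a naive estimate loses a factor and yields only the weight $n(n+2-\alpha)$ in place of $n(n+\alpha)$ on the co-analytic coefficients, so selecting the combination $A\pm cB$ with the correct $c$ is exactly what makes the bound tight. A secondary point needing care is the boundary/strictness behaviour, already visible in the extremal affine map $z+b_1\overline{z}$ with $|b_1|=(1-\alpha)/(1+\alpha)$, where $\RE\{A/B\}$ attains the value $\alpha$; for $0<r<1$ the strict inequalities $r^{n-1}<1$ for $n\geq2$ preserve strictness away from this degenerate case.
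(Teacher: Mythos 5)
The paper gives no proof of this lemma --- it is quoted verbatim from Jahangiri \cite{jahangiriconvex} --- and your argument is correct and is essentially the original one: reduce \eqref{eq2.1} to $\RE\{A/B\}>\alpha$ with $A=z(zh')'+\overline{z(zg')'}$ and $B=zh'-\overline{zg'}$, rewrite this as $|A+(1-\alpha)B|>|A-(1+\alpha)B|$, and estimate termwise so that the two sides combine to give exactly the stated weights $n(n-\alpha)$ and $n(n+\alpha)$. The only caveat, which you yourself flag, is the degenerate boundary case (e.g. $f=z+b_1\bar z$ with $(1+\alpha)|b_1|=1-\alpha$), where the inequality in \eqref{eq2.1} is attained rather than strict; this looseness is inherited from the statement (and from the paper's own example of affine maps being fully convex of order $(|a|-|b|)/(|a|+|b|)$), not introduced by your proof.
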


The analytic description of functions in $\mathcal{FK}_{H}(\alpha)$ $(0\leq\alpha<1)$ is seen by the following theorem.

\begin{theorem}\label{th2.3}
Let $f=h+\bar{g} \in \mathcal{H}$ be sense-preserving and let $0\leq\alpha < 1$. Then $f \in \mathcal{FK}_{H}(\alpha)$ if and only if
\begin{equation}\label{eq2.2}
\begin{split}
|zh'(z)|^{2}\left[\RE \left(1+\frac{zh''(z)}{h'(z)}\right)-\alpha\right]&>|zg'(z)|^{2}\left[\RE \left(1+\frac{zg''(z)}{g'(z)}\right)+\alpha\right]\\
&+\RE[z^2(zh''(z)g'(z)-2\alpha h'(z)g'(z)-zh'(z)g''(z))]
\end{split}
\end{equation}
for all $z \in \mathbb{D}$.
\end{theorem}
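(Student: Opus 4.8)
The plan is to turn the geometric condition \eqref{eq2.1} into a pointwise inequality by a direct computation and then recognize that inequality as \eqref{eq2.2}. Throughout I fix $r\in(0,1)$, write $z=re^{i\theta}$, and abbreviate $w:=\partial_\theta f(re^{i\theta})$. Since $\partial z/\partial\theta=iz$, differentiating $f=h+\bar g$ once gives
\[
w=\frac{\partial f}{\partial\theta}=i\bigl(zh'(z)-\overline{zg'(z)}\bigr),
\]
and differentiating again gives
\[
\frac{\partial^{2}f}{\partial\theta^{2}}=-\bigl(zh'(z)+z^{2}h''(z)\bigr)-\overline{\bigl(zg'(z)+z^{2}g''(z)\bigr)}.
\]
Because $\arg w=\IM\log w$ and $\theta$ is real, we have
\[
\frac{\partial}{\partial\theta}\Bigl(\arg\bigl\{\partial_\theta f\bigr\}\Bigr)=\IM\!\left(\frac{\partial_\theta^{2}f}{w}\right)=\frac{\IM\bigl(\overline{w}\,\partial_\theta^{2}f\bigr)}{|w|^{2}}.
\]
The denominator is strictly positive: since $f$ is sense-preserving, $|h'|>|g'|$ in $\mathbb{D}$, so for $z\neq0$ one has $|w|\ge|z|\bigl(|h'(z)|-|g'(z)|\bigr)>0$, and $w\neq0$ makes the tangent direction well defined. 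Hence \eqref{eq2.1} is equivalent to the inequality $\IM\bigl(\overline{w}\,\partial_\theta^{2}f\bigr)>\alpha|w|^{2}$.

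Next I would expand the two quantities $|w|^{2}$ and $\IM(\overline{w}\,\partial_\theta^{2}f)$ into real parts of analytic expressions. A short calculation yields
\[
|w|^{2}=|zh'|^{2}+|zg'|^{2}-2\RE\bigl(z^{2}h'g'\bigr).
\]
For the numerator the decisive algebraic step is the identity $|z|^{2}\RE\bigl(zh''\overline{h'}\bigr)=|zh'|^{2}\,\RE\bigl(zh''/h'\bigr)$ together with its analogue for $g$; this collapses the ``diagonal'' cross terms into $|zh'|^{2}\RE(1+zh''/h')$ and $|zg'|^{2}\RE(1+zg''/g')$, while the remaining ``off-diagonal'' pieces combine (after noting that $\RE(\overline{Q}\,\overline{zh'})=\RE(Qzh')$) into $\RE\bigl(z^{3}(h'g''-h''g')\bigr)$. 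Carrying out this bookkeeping gives
\[
\IM\bigl(\overline{w}\,\partial_\theta^{2}f\bigr)=|zh'|^{2}\RE\!\Bigl(1+\frac{zh''}{h'}\Bigr)-|zg'|^{2}\RE\!\Bigl(1+\frac{zg''}{g'}\Bigr)+\RE\bigl(z^{3}(h'g''-h''g')\bigr).
\]

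Finally I would substitute these two expressions into $\IM(\overline{w}\,\partial_\theta^{2}f)>\alpha|w|^{2}$ and collect terms. Moving the $g$-contribution and the three $\alpha$-terms $-\alpha|zh'|^{2}$, $-\alpha|zg'|^{2}$, $+2\alpha\RE(z^{2}h'g')$ to the right, and using $-\RE\bigl(z^{3}(h'g''-h''g')\bigr)=\RE\bigl(z^{3}(h''g'-h'g'')\bigr)$, the right-hand side becomes
\[
|zg'|^{2}\Bigl[\RE\!\Bigl(1+\frac{zg''}{g'}\Bigr)+\alpha\Bigr]+\RE\bigl[z^{2}\bigl(zh''g'-2\alpha h'g'-zh'g''\bigr)\bigr],
\]
which is exactly the right-hand side of \eqref{eq2.2}; this establishes the equivalence for every $z\in\mathbb{D}$. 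For the ``if'' direction it then remains to note that $\partial_\theta\arg\{\partial_\theta f\}>\alpha\ge0$ forces the tangent direction to turn strictly monotonically as $\theta$ runs over $[0,2\pi]$, so each image circle is traversed once as a simple convex curve and $f$ is genuinely fully convex of order $\alpha$; the ``only if'' direction uses only Definition \ref{def2.1}.

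The main obstacle I anticipate is the real-part bookkeeping in the numerator: keeping track of which conjugations turn $\RE(\overline{Q}\,\overline{zh'})$ back into $\RE(Qzh')$, and pairing the terms so that the pure cross term $z^{3}(h'g''-h''g')$ and the mixed term $-2\alpha\RE(z^{2}h'g')$ emerge in the right combination. The identity relating $\RE\bigl(zh''\overline{h'}\bigr)$ to $\RE\bigl(zh''/h'\bigr)$ is what makes the whole expression close up into the compact analytic form \eqref{eq2.2}.
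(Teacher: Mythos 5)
Your computation is correct and follows exactly the route the paper intends: you carry out explicitly the ``simple calculation'' showing that the defining inequality \eqref{eq2.1}, i.e.\ $\IM\bigl(\overline{w}\,\partial_{\theta}^{2}f\bigr)>\alpha|w|^{2}$, is pointwise equivalent to \eqref{eq2.2}, and every identity you use (the expansion of $|w|^{2}$, the collapse of $\RE(\overline{zh'}\,z^{2}h'')$ into $|zh'|^{2}\RE(zh''/h')$, and the cancellation of the $\RE(AC)$ terms) checks out. The only step you compress --- that strict monotone turning of the tangent forces each image circle to be a simple convex curve traversed once, which requires knowing the total turning is exactly $2\pi$ (it is, since $f$ sense-preserving makes $zh'-\overline{zg'}$ wind once about the origin) --- is precisely the part the paper outsources to the proof of Theorem 3 of Chuaqui, Duren and Osgood, so your argument matches the paper's in both substance and structure.
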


\begin{proof}
Suppose that $f \in  \mathcal{FK}_{H}(\alpha)$. A simple calculation shows that \eqref{eq2.1} reduces to the condition \eqref{eq2.2}. Conversely, if $f$ satisfies \eqref{eq2.2}, then by the proof of Theorem 3 in \cite[p.\ 139]{chuaqui}, $f$ maps each circle $|z|=r<1$ in a one-to-manner onto a convex curve satisfying \eqref{eq2.1} so that $f \in  \mathcal{FK}_{H}(\alpha)$.
\end{proof}

If $\alpha=0$, then Theorem \ref{th2.3} reduces to \cite[Theorem 3, p. 139]{chuaqui}. We next define the notion of fully starlike functions of order $\alpha$ $(0\leq\alpha<1)$.
\begin{definition}\label{def2.4}
A harmonic mapping $f$ of the unit disk $\mathbb{D}$ with $f(0)=0$ is said to be \textbf{fully starlike of order $\alpha$} $(0\leq \alpha <1)$ if it maps every circle $|z|=r<1$ in a one-to-one manner onto a curve that bounds a domain starlike with respect to the origin satisfying
\begin{equation}\label{eq2.3}
\frac{\partial}{\partial \theta}\arg f(r e^{i \theta})> \alpha,\quad 0\leq\theta\leq 2\pi,\quad 0<r<1.
\end{equation}
If $\alpha=0$, then $f$ is said to be \emph{fully starlike}.
\end{definition}

Example \ref{ex1.1} shows that the harmonic half-plane mapping $L$ given by \eqref{eq1.5} is fully starlike in $|z|<\sqrt{(7\sqrt{7}-17)/2}\approx0.871854$. The affine mappings $f(z)=\alpha z+\beta \overline{z}$ with $|\alpha|>|\beta|$, are fully starlike of order $(|\alpha|-|\beta|)/(|\alpha|+|\beta|)$.
Unlike fully convex mappings, a fully starlike mapping need not be univalent (see \cite{chuaqui}). It is also clear that every fully convex mapping of order $\alpha$ is fully starlike of order $\alpha$. However, the converse is not true as seen by the following example.

\begin{example}\label{ex2.5}
For $n \geq 2$ and $\alpha \in[0,1)$, consider the function
\[f_{n}(z)=z+\frac{1-\alpha}{n+\alpha}\bar{z}^n,\quad z\in \mathbb{D}.\]
The functions $f_{n}$ $(n\geq 2)$ are fully starlike of order $\alpha$. In fact, for $z=r e^{i\theta}$, we have
\[\frac{\partial}{\partial \theta}\arg f_{n}(r e^{i \theta})=\RE  \left(\frac{z-\frac{n(1-\alpha)}{n+\alpha}\bar{z}^n}{z+\frac{1-\alpha}{n+\alpha}\bar{z}^n}\right) \geq\frac{1-\frac{n(1-\alpha)}{n+\alpha}r^{n-1}}{1+\frac{1-\alpha}{n+\alpha}r^{n-1}}>\alpha.\]
Further, observe that
\[\frac{\partial}{\partial \theta}\left(\arg \left\{\frac{\partial}{\partial \theta}f_{n}(r e^{i \theta})\right\}\right)=\RE \frac{z+\frac{n^2(1-\alpha)}{n+\alpha}\bar{z}^n}{z-\frac{n(1-\alpha)}{n+\alpha}\bar{z}^n}\geq\frac{1-\frac{n^2(1-\alpha)}{n+\alpha}r^{n-1}}{1+\frac{n(1-\alpha)}{n+\alpha}r^{n-1}}.\]
Therefore, it follows that $f_{n}$ is fully convex of order $\alpha$ in $|z|<1/n^{1/(n-1)}$. In particular, this shows that $f_{n}$ is not fully convex of order $\alpha$ in $\mathbb{D}$.
\end{example}

The condition \eqref{eq2.3} is sufficient but not necessary for a function $f \in \mathcal{S}_{H}$ to map $\mathbb{D}$ onto a starlike domain (see \cite[Theorem 1]{mocanu}). Similar to Theorem \ref{convex}, the next theorem provides a sufficient condition which guarantees a sense-preserving harmonic mapping to be fully starlike. It's proof follows by invoking \cite[Theorem 3, p. 10]{stable}.

\begin{theorem}\label{starlike}
A sense-preserving harmonic function $f=h+\bar{g}$ is fully starlike in $\mathbb{D}$ if the analytic functions $h+\epsilon g$ are starlike in $\mathbb{D}$ for each $|\epsilon|=1$.
\end{theorem}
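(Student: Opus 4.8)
The plan is to mirror the argument used for Theorem \ref{convex}, simply replacing the convexity input from \cite{cluniesheilsmall} by its starlikeness counterpart, which is exactly the content of \cite[Theorem 3, p.\ 10]{stable}. The first observation is a reformulation: by Definition \ref{def2.4} with $\alpha=0$, saying that $f$ is fully starlike means that for every $r\in(0,1)$ the map $f$ sends the circle $|z|=r$ one-to-one onto a curve bounding a domain starlike with respect to the origin. Equivalently, $f$ is starlike (in the harmonic sense) on each concentric subdisk $|z|<r$. Hence it suffices to establish that $f$ is starlike in $|z|<r_0$ for each fixed $r_0\in(0,1]$, exactly as in the proof of Theorem \ref{convex}.

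With $r_0\in(0,1]$ fixed, I would first note that, since starlikeness is a hereditary property for conformal mappings, each analytic function $h+\epsilon g$ with $|\epsilon|=1$ — starlike on all of $\mathbb{D}$ by hypothesis — is in particular starlike on the smaller disk $|z|<r_0$. (Analytically this is immediate: the defining inequality $\RE\left(z(h+\epsilon g)'(z)/(h+\epsilon g)(z)\right)>0$ holds for every $z\in\mathbb{D}$ and therefore a fortiori for every $z$ with $|z|<r_0$.) Then I would invoke \cite[Theorem 3, p.\ 10]{stable}, which asserts that a sense-preserving harmonic mapping $h+\bar g$ is starlike on a disk whenever all of its associated analytic functions $h+\epsilon g$, $|\epsilon|=1$, are starlike there. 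Applying this on $|z|<r_0$ yields that $f$ is starlike in $|z|<r_0$; since $r_0\in(0,1]$ was arbitrary, $f$ is starlike on every subdisk, i.e.\ fully starlike, as claimed.

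The argument is formal once the correct ``black box'' is identified, so I do not anticipate a substantial obstacle. The one point deserving care is to confirm that the cited stability theorem is applied in the right direction — deducing starlikeness of the harmonic map $f=h+\bar g$ from the starlikeness of the whole family $\{h+\epsilon g:|\epsilon|=1\}$, and not the converse — and that its hypotheses (sense-preservation of $f$, together with the full unit circle of parameters $\epsilon$) coincide with those assumed here. One should also verify that the cited result delivers genuine starlikeness in the sense of Definition \ref{def2.4}, namely the one-to-one mapping of each circle together with the monotonicity of $\arg f(re^{i\theta})$, rather than merely the weaker statement that the image is a starlike set; this is precisely what guarantees that the conclusion is ``fully starlike'' rather than only ``starlike on $\mathbb{D}$.''
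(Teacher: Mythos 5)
Your proposal is correct and follows essentially the same route as the paper, which omits a written proof and simply states that the result follows by invoking \cite[Theorem 3, p.\ 10]{stable} in analogy with the proof of Theorem \ref{convex}. Your fixing of $r_0\in(0,1]$ and application of the cited stability theorem on each subdisk is exactly the intended argument, spelled out in slightly more detail than the paper provides.
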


Let $\mathcal{FS}^{*}_{H}(\alpha)$ denote the subclass of $\mathcal{S}^{*}_{H}$ consisting of fully starlike functions of order $\alpha$ $(0\leq \alpha <1)$, with $\mathcal{FS}^{*}_{H}:=\mathcal{FS}^{*}_{H}(0)$ and let $\mathcal{FS}_{H}^{*0}(\alpha)=\mathcal{FS}_{H}^{*}(\alpha) \cap \mathcal{S}_{H}^{*0}$. In \cite{jahangiristarlike}, Jahangiri gave a sufficient condition for functions $f \in \mathcal{H}$ to be in $\mathcal{FS}^{*}_{H}(\alpha)$.

\begin{lemma}\cite{jahangiristarlike}\label{lem2.6}
Let $f=h+\bar{g}$, where $h$ and $g$ are given by \eqref{eq1.1}. Furthermore, let
\[\sum_{n=2}^{\infty}\frac{n-\alpha}{1-\alpha}|a_{n}|+\sum_{n=1}^{\infty}\frac{n+\alpha}{1-\alpha}|b_{n}|\leq 1\]
and $0\leq \alpha<1$. Then $f \in \mathcal{FS}^{*}_{H}(\alpha)$.
\end{lemma}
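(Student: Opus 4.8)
The plan is to verify the two requirements of Definition \ref{def2.4} directly from the coefficient bound, the analytic heart of the matter being inequality \eqref{eq2.3}. First I would record the identity
\[\frac{\partial}{\partial \theta}\arg f(re^{i\theta})=\RE\left(\frac{zh'(z)-\overline{zg'(z)}}{h(z)+\overline{g(z)}}\right),\qquad z=re^{i\theta},\]
which follows by writing $\arg f=\IM\log f$ together with $\partial f/\partial\theta = i\bigl(zh'(z)-\overline{zg'(z)}\bigr)$; this is exactly the quantity computed in Example \ref{ex2.5}. Thus it suffices to prove $\RE\bigl(A(z)/B(z)\bigr)>\alpha$ throughout $\mathbb{D}$, where $A(z)=zh'(z)-\overline{zg'(z)}$ and $B(z)=h(z)+\overline{g(z)}$.

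The key step is to replace this real-part inequality by the equivalent separation of moduli
\[\bigl|A(z)-(\alpha-1)B(z)\bigr|>\bigl|A(z)-(\alpha+1)B(z)\bigr|,\]
valid because $\RE w>\alpha$ says precisely that $w$ lies closer to $\alpha+1$ than to $\alpha-1$. Substituting the series \eqref{eq1.1} gives
\[A-(\alpha+1)B=-\alpha z+\sum_{n=2}^{\infty}(n-\alpha-1)a_{n}z^{n}-\overline{\sum_{n=1}^{\infty}(n+\alpha+1)b_{n}z^{n}},\]
\[A-(\alpha-1)B=(2-\alpha)z+\sum_{n=2}^{\infty}(n-\alpha+1)a_{n}z^{n}-\overline{\sum_{n=1}^{\infty}(n+\alpha-1)b_{n}z^{n}}.\]
Since $0\le\alpha<1$, every coefficient $n-\alpha\pm1$ (for $n\ge2$) and $n+\alpha\pm1$ (for $n\ge1$) is nonnegative, so bounding the second modulus below and the first above by the triangle inequality and subtracting yields a lower bound of
\[2(1-\alpha)|z|-2\sum_{n=2}^{\infty}(n-\alpha)|a_{n}||z|^{n}-2\sum_{n=1}^{\infty}(n+\alpha)|b_{n}||z|^{n}.\]
Using $|z|^{n}\le|z|$ and factoring out $2(1-\alpha)|z|$, this is at least $2(1-\alpha)|z|\bigl[1-\sum_{n=2}^{\infty}\tfrac{n-\alpha}{1-\alpha}|a_{n}|-\sum_{n=1}^{\infty}\tfrac{n+\alpha}{1-\alpha}|b_{n}|\bigr]\ge 0$ by hypothesis, with strict inequality for $z\neq0$. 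This establishes \eqref{eq2.3}. The function $f_{n}$ of Example \ref{ex2.5}, for which the hypothesis holds with equality, confirms the bound is sharp.

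It remains to deduce the geometric content of $\mathcal{FS}^{*}_{H}(\alpha)$: that $f$ maps each circle one-to-one onto a starlike Jordan curve and is univalent. Here I would note that the same coefficient bound forces $\sum_{n=2}^{\infty}n|a_{n}|+\sum_{n=1}^{\infty}n|b_{n}|\le1$, because $\tfrac{n-\alpha}{1-\alpha}\ge n$ and $\tfrac{n+\alpha}{1-\alpha}\ge n$ for all admissible $n$ and $\alpha$; this gives $|g'(z)|<|h'(z)|$ in $\mathbb{D}$, so $f$ is sense-preserving. The strict monotonicity of $\Psi_{r}(\theta)=\arg f(re^{i\theta})$ with total increase $2\pi$ then makes each image curve a simple closed curve winding once about the origin, which is starlike, and univalence follows by the standard argument-principle reasoning for harmonic mappings. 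I expect the main obstacle to lie not in the (routine) coefficient estimate but in this last passage from the pointwise differential inequality to genuine geometric starlikeness and univalence required for membership in $\mathcal{S}^{*}_{H}$; the cleanest route is to invoke the monotone-winding argument of Jahangiri \cite{jahangiristarlike}, which supplies precisely this implication.
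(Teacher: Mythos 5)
The paper offers no proof of this lemma --- it is quoted from Jahangiri \cite{jahangiristarlike} --- and your reconstruction is essentially Jahangiri's original argument: rewrite $\frac{\partial}{\partial\theta}\arg f(re^{i\theta})=\RE(A/B)$ with $A=zh'-\overline{zg'}$, $B=h+\bar g$, convert $\RE w>\alpha$ into the modulus separation $|A-(\alpha-1)B|>|A-(\alpha+1)B|$, and finish with the triangle inequality and $|z|^{n}\leq|z|$; the computation checks out. The one soft spot is your claim of strict inequality for $z\neq0$: strictness comes from $|z|^{n-1}<1$ only for $n\geq2$, so if the hypothesis holds with equality and all the coefficient mass sits on $b_{1}$ (e.g.\ $\alpha=0$, $|b_{1}|=1$, $f=z+\overline{b_{1}z}$) the bound degenerates to $0$ and $f$ is not even sense-preserving --- an edge case inherited from the lemma as stated rather than a flaw you introduced, but worth flagging when you also use the same sum to deduce $|g'|<|h'|$ and $f\neq0$ off the origin.
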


Corresponding to Theorem \ref{th2.3}, the analytic characterization of functions in  $\mathcal{FS}^{*}_{H}(\alpha)$ is given in the following theorem with the case $\alpha=0$ being reduced to \cite[Theorem 3, p. 139]{chuaqui}.

\begin{theorem}\label{th2.7}
Let $f=h+\bar{g} \in \mathcal{H}$ be sense-preserving and let $0\leq\alpha < 1$. Then $f \in \mathcal{FS}^{*}_{H}(\alpha)$ if and only if $f(z)\neq 0$ for $0<|z|<1$ and
\begin{equation}\label{eq2.4}
|h(z)|^{2}\left(\RE \frac{zh'(z)}{h(z)}-\alpha\right)>|g(z)|^{2}\left(\RE \frac{zg'(z)}{g(z)}+\alpha\right)+\RE (zh(z)g'(z)+2\alpha h(z) g(z)-zh'(z)g(z))
\end{equation}
for all $z \in \mathbb{D}$.
\end{theorem}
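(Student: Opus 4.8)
The plan is to read everything off the defining inequality \eqref{eq2.3}, whose central object is the rotational derivative $\partial_\theta \arg f(re^{i\theta})$; I would convert this quantity into the analytic inequality \eqref{eq2.4}, establishing the forward implication by a direct computation and the converse by importing the geometric (one-to-one onto a starlike curve) argument already available for the case $\alpha=0$. Throughout, the two directions share the same algebraic identity, so the real content is setting up that identity and then handling the winding-number issue in the converse.

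First I would compute, for $z=re^{i\theta}$,
\[
\frac{\partial}{\partial\theta} f(re^{i\theta}) = i\bigl(zh'(z)-\overline{zg'(z)}\bigr),
\]
so that, since $\arg f=\IM\log f$ and $\IM(iw)=\RE(w)$,
\[
\frac{\partial}{\partial\theta}\arg f(re^{i\theta}) = \RE\left(\frac{zh'(z)-\overline{zg'(z)}}{h(z)+\overline{g(z)}}\right).
\]
This is where the hypothesis $f(z)\neq 0$ for $0<|z|<1$ first enters: it is exactly what makes the quotient, and hence $\arg f$, well defined on the punctured disk. Multiplying the inequality $\partial_\theta\arg f>\alpha$ by the positive factor $|h+\overline{g}|^{2}=|f|^{2}$ turns \eqref{eq2.3} into
\[
\RE\bigl[(zh'(z)-\overline{zg'(z)})(\overline{h(z)}+g(z))\bigr] > \alpha\,|f(z)|^{2}.
\]
Expanding the left side, using $\RE(\overline{w})=\RE(w)$ together with $\RE(zh'\overline{h})=|h|^{2}\RE(zh'/h)$ and $\RE(zg'\overline{g})=|g|^{2}\RE(zg'/g)$ (read simply as $\RE(zh'\overline{h})$ and $\RE(zg'\overline{g})$ where $h$ or $g$ vanishes), and substituting $|f|^{2}=|h|^{2}+|g|^{2}+2\RE(hg)$, I would regroup terms to recover \eqref{eq2.4}. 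The cross terms that appear are $\RE(zh'(z)g(z))$ and $-\RE(zg'(z)h(z))=-\RE(zh(z)g'(z))$, which, together with $-2\alpha\RE(h(z)g(z))$, reassemble exactly into $-\RE\bigl(zh(z)g'(z)+2\alpha h(z)g(z)-zh'(z)g(z)\bigr)$; this is the only slightly delicate bookkeeping, and it is routine.

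For the converse I would run this computation backwards: \eqref{eq2.4} together with $f\neq 0$ gives $\partial_\theta\arg f>\alpha\geq 0$ at every $z$, so $\theta\mapsto\arg f(re^{i\theta})$ is strictly increasing on each circle $|z|=r$. The remaining and main difficulty is to promote this monotonicity to the assertion that $f$ maps each circle \emph{one-to-one} onto a curve bounding a domain starlike about the origin, that is, that the total increment of $\arg f$ around the circle equals exactly $2\pi$ rather than a larger multiple of it. I expect this winding-number step to be the crux, and I would supply it precisely as in the proof of \cite[Theorem 3, p.\ 139]{chuaqui}: the sense-preserving hypothesis (via Lewy's theorem) yields local injectivity, which combined with the strictly increasing argument forces winding number one and hence the required one-to-one starlike parametrization, giving $f\in\mathcal{FS}^{*}_{H}(\alpha)$. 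The case $\alpha=0$ is exactly that theorem, and the identity above shows the order-$\alpha$ refinement costs only the extra $\alpha$-terms.
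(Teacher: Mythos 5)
Your proposal is correct and follows essentially the same route as the paper: the necessity is the direct computation that rewrites $\partial_\theta\arg f=\RE\bigl((zh'-\overline{zg'})/f\bigr)$ and clears the denominator by $|f|^2$ to obtain \eqref{eq2.4} (your regrouping of the cross terms checks out), and the sufficiency defers the one-to-one/winding-number step to the proof of Theorem 3 in \cite{chuaqui}, exactly as the paper does (the paper cites \cite[Theorem 1]{mocanu} for the univalence conclusion, which is the same geometric content you attribute to the sense-preserving/local-injectivity argument). No gap.
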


\begin{proof}
The necessary part follows immediately by the univalence of $f$ and \eqref{eq2.3}. For sufficiency, suppose that $f(z) \neq 0$ for $z \neq 0$ and \eqref{eq2.4} holds. Then, by the proof of Theorem 3 in \cite[p.\ 139]{chuaqui}, $f$ is fully starlike in $\mathbb{D}$ satisfying \eqref{eq2.3} and hence univalent by \cite[Theorem 1]{mocanu}, so $f \in \mathcal{FS}^{*}_{H}(\alpha)$.
\end{proof}

\begin{remark}\label{rem2.8}
For normalized analytic functions $f$, conditions \eqref{eq2.2} and \eqref{eq2.4} reduces to \eqref{eq1.6}. In general, given $\alpha \in [0,1)$ and $h \in \mathcal{K}(\alpha)$, the function
\[f=h+\epsilon \bar{h} \in \mathcal{FK}_{H}\left(\frac{1-|\epsilon|}{1+|\epsilon|}\alpha\right)\quad \mbox{for}\quad |\epsilon|<1.\]
To see this, note that for $z=r e^{i\theta} \in \mathbb{D}\backslash\{0\}$, we have
\begin{align*}
\frac{\partial}{\partial \theta}\left(\arg \left\{\frac{\partial}{\partial \theta}f(r e^{i\theta})\right\}\right)&=\RE\left(\frac{zh'(z)+z^{2}h''(z)+\overline{\epsilon(zh'(z)+z^2 h''(z))}}{zh'(z)-\overline{\epsilon z h'(z)}}\right)\\
                          &=\frac{(1-|\epsilon|^{2})|zh'(z)|^{2}}{|zh'(z)-\overline{\epsilon z h'(z)}|^{2}}\RE \left(1+\frac{zh''(z)}{h'(z)}\right)\\
                          &>\frac{1-|\epsilon|}{1+|\epsilon|}\alpha
\end{align*}
In particular, this shows that $\mathcal{K}(\alpha)\subset\mathcal{FK}_{H}(\alpha)$. A similar statement shows that $\mathcal{S}^{*}(\alpha)\subset\mathcal{FS}^{*}_{H}(\alpha)$
\end{remark}

The next example examines the fully convexity and fully starlikeness of order $\alpha$ ($0 \leq \alpha <1$) of the harmonic half-plane mapping $L$ given by \eqref{eq1.5}.

\begin{example}\label{ex2.9}
Since
\[\frac{\partial}{\partial \theta}\left(\arg \left\{\frac{\partial}{\partial \theta}L(r e^{i \theta})\right\}\right)=\frac{1-6r^2+r^4+12r^2\cos^2{\theta}-4r(1+r^2)\cos^3 \theta}{1+(2 \cos^2 \theta -3)[4 \cos \theta (1+r^2)-6r]r+r^4},\]
therefore if we set
\[p(r,u)=1-6r^2+r^4+12r^2 u^2-4r(1+r^2)u^3-\alpha [1+(2u^2-3)\{4u(1+r^2)-6r\}r+r^4]\]
where $u=\cos \theta$, then $p(r,-1)>0$ and $p(r,1)>0$ for $r \in (0,1)$ and $\alpha \in [0,1)$. Also, further analysis shows that $p(r,u)$ has a local minimum at $u=u_{0}$, where $u_{0}$ is given by
\[u_{0}=\frac{r(1+\alpha)-\sqrt{\alpha(1+2\alpha)(1+r^4)+(1+4\alpha+5\alpha^2)r^2}}{(1+r^2)(1+2\alpha)}.\]
Consequently, it follows that $L$ is fully convex of order $\alpha$ in $|z|<r_{C}$, where $r_{C}=r_{C}(\alpha)$ is the positive root of the equation $p(r,u_{0})=0$. In particular, $r_{C}(0)=\sqrt{2}-1$,
\[r_{C}\left(\frac{1}{4}\right)=\frac{1}{3}\sqrt{\frac{1}{3}(223-70\sqrt{10})}\approx 0.246499, \quad r_{C}\left(\frac{1}{2}\right)=\frac{1}{\sqrt{26+15\sqrt{3}}}\approx0.138701\]
and
\[ r_{C}\left(\frac{3}{4}\right)=\sqrt{\frac{5}{681+182\sqrt{14}}}\approx0.0605898.\]

Regarding the fully starlikeness of $L$, note that
\[\frac{\partial}{\partial \theta}\arg L(r e^{i \theta})=\frac{(1-r^2)[1+(2\cos^2 \theta -5)r \cos\theta+3r^2- r^3 \cos\theta]}{(1 - 2 r \cos\theta+ r^2)^2 (\cos \theta - r)^2 + (1 - \cos^2\theta) (1 - r^2)^2}.\]
Considering the function
\[q(r,u)=(1-r^2)[1+u(2u^2-5)r+3r^2-ur^3]-\alpha[(1 - 2 r u + r^2)^2 (u - r)^2 + (1 - u^2) (1 - r^2)^2]\]
where $u=\cos \theta$, we see that for $\alpha \in (0,1)$, $q(r,u)\geq 0$ for $-1 \leq u \leq 1$ if and only if
\[q(r,-1)=(1+r)^4[1-r-\alpha(1+r)^2] \geq 0.\]
This inequality implies that $r \leq r_{S}$ where $r_{S}=r_{S}(\alpha)$ is given by
\[r_{S}(\alpha)=\frac{\sqrt{1+8\alpha}-(1+2\alpha)}{2\alpha}.\]
This shows that $L$ is fully starlike of order $\alpha$ in $|z|<r_{S}$, provided $\alpha \in (0,1)$. The case $\alpha=0$ is being discussed in Example \ref{ex1.1}. Note that $r_{C}(0)=r_{S}(1-1/\sqrt{2})$.
\end{example}

\begin{remark}
Given $f \in \mathcal{FK}_{H}$, it will be interesting to determine $\alpha \in [0,1)$ for which $f \in \mathcal{FS}^{*}_{H}(\alpha)$. For instance, the function $L(az)/a$, where $a=\sqrt{2}-1$ and $L$ is given by \eqref{eq1.5}, belongs to the class $\mathcal{FK}_{H}$ and Example \ref{ex2.9} shows that $L(az)/a\in \mathcal{FS}^{*}_{H}(1-1/\sqrt{2})$. This example motivates the following problem:

\textbf{Problem.} To determine $\alpha \in [0,1-1/\sqrt{2}]$ such that $\mathcal{FK}_{H}\subset\mathcal{FS}^{*}_{H}(\alpha)$.\\
This problem may be regarded as the harmonic analogue of well-known Marx-Strohh\"{a}cker inequality.
\end{remark}

The next theorem deals with the harmonic analogue of Alexander's theorem in context of fully convex and fully starlike mappings of order $\alpha$ $(0\leq \alpha<1)$. The proof being similar to \cite[Theorem 4, p. 140]{chuaqui} is omitted.
\begin{theorem}\label{th2.10}
Let $h$, $g$, $H$ and $G$ be analytic functions in the unit disc $\mathbb{D}$, related by
\[zH'(z)=h(z)\quad\mbox{and}\quad zG'(z)=-g(z)\]
Then, $f=h+\bar{g}$ is fully starlike of order $\alpha$ if and only if $F=H+\bar{G}$ is fully convex of order $\alpha$, where $0\leq \alpha<1$.
\end{theorem}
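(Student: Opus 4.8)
The plan is to establish the equivalence through the analytic characterizations of Theorems~\ref{th2.3} and~\ref{th2.7}, exploiting the differential relations $zH'(z)=h(z)$ and $zG'(z)=-g(z)$ to translate the fully convex condition \eqref{eq2.2} for $F=H+\bar{G}$ into the fully starlike condition \eqref{eq2.4} for $f=h+\bar{g}$. First I would record the consequences of differentiating the relations: from $zH'(z)=h(z)$ we get $H'(z)+zH''(z)=h'(z)$, so that $zH''(z)/H'(z)+1 = zh'(z)/h(z)$ after multiplying by $z/H'(z)=z^2/h(z)$; similarly $zG'(z)=-g(z)$ yields $zG''(z)/G'(z)+1 = zg'(z)/g(z)$. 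These identities show that the logarithmic-derivative quantities governing fully convexity of $F$ are precisely the logarithmic-derivative quantities governing fully starlikeness of $f$.

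Next I would substitute $H'(z)=h(z)/z$ and $G'(z)=-g(z)/z$ directly into the fully convex inequality \eqref{eq2.2} written for the pair $(H,G)$. The factor $|zH'(z)|^2$ becomes $|h(z)|^2$ and $|zG'(z)|^2$ becomes $|g(z)|^2$, while the real-part terms $\RE(1+zH''/H')$ and $\RE(1+zG''/G')$ become $\RE(zh'/h)$ and $\RE(zg'/g)$ respectively by the identities above. The remaining task is to check that the cross term $\RE[z^2(zH''G'-2\alpha H'G'-zH'G'')]$ transforms exactly into the cross term $\RE(zhg'+2\alpha hg-zh'g)$ appearing in \eqref{eq2.4}. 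Using $zH''=h'-h/z$, $zG''=-(g'-g/z)$, $H'=h/z$, and $G'=-g/z$, I would expand each of the three pieces of this cross term and verify, after the multiplication by $z^2$, that the products collapse to the claimed expression; the sign flip from $zG'(z)=-g(z)$ is what converts the convex cross term into the starlike cross term with the correct signs.

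The main obstacle will be the bookkeeping in this cross-term computation: several terms of the form $h'g$, $hg'$, and $hg$ arise with competing signs and factors of $z$ and $\bar{z}$, and one must confirm that the pieces coming from $-h/z$ and $g/z$ in the expressions for $zH''$ and $zG''$ cancel or combine precisely to reproduce \eqref{eq2.4} rather than leaving a residual term. Once the inequality \eqref{eq2.2} for $(H,G)$ is shown to be algebraically identical to \eqref{eq2.4} for $(h,g)$, the equivalence is immediate in both directions, subject to the nonvanishing hypothesis $f(z)\neq 0$ in Theorem~\ref{th2.7}; I would note that this condition is automatic here since $f=h+\bar g$ fully starlike already requires it, and the sense-preserving hypothesis transfers between $f$ and $F$ because $|G'|=|g|/|z| < |h|/|z|=|H'|$ is equivalent to $|g|<|h|$. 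Since the computation is a direct parallel of \cite[Theorem~4, p.~140]{chuaqui} with the order-$\alpha$ terms carried along, and the authors indeed declare the proof omitted, I would present the substitution and the single verifying identity as the crux and leave the full expansion to the reader.
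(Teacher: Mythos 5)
Your algebraic core is sound: substituting $zH'=h$ and $zG'=-g$ does turn \eqref{eq2.2} for $(H,G)$ into \eqref{eq2.4} for $(h,g)$ --- one checks $1+zH''/H'=zh'/h$, $1+zG''/G'=zg'/g$, and the cross term $z^2(zH''G'-2\alpha H'G'-zH'G'')$ collapses to $zhg'+2\alpha hg-zh'g$. But routing the proof through Theorems \ref{th2.3} and \ref{th2.7} creates a genuine gap, for two reasons. First, your claim that the sense-preserving hypothesis ``transfers'' between $f$ and $F$ is false: sense-preservation of $F$ means $|G'|<|H'|$, which under the given relations translates to $|g|<|h|$, whereas sense-preservation of $f$ means $|g'|<|h'|$; these are not equivalent. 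The example immediately following Theorem \ref{th2.10} in the paper is exactly a counterexample: $F=L(r_{C}z)/r_{C}$ is sense-preserving (indeed univalent), yet the associated $f$ has a Jacobian that vanishes inside $\mathbb{D}$. Second, Theorems \ref{th2.3} and \ref{th2.7} characterize membership in $\mathcal{FK}_{H}(\alpha)\subset\mathcal{K}_{H}$ and $\mathcal{FS}^{*}_{H}(\alpha)\subset\mathcal{S}^{*}_{H}$, so they presuppose normalization and sense-preservation, while Theorem \ref{th2.10} is stated for arbitrary analytic $h$, $g$, $H$, $G$ and for the pointwise Definitions \ref{def2.1} and \ref{def2.4}; the paper applies it precisely in a case where $f$ is not even locally univalent (which is why Corollary \ref{cor2.13} is only a ``partial analogue''). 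Your argument therefore proves the equivalence only under extra hypotheses that neither hold in general nor pass from one side of the equivalence to the other.

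The proof the paper intends (following \cite[Theorem 4, p.~140]{chuaqui}) avoids all of this. Since $\frac{\partial}{\partial\theta}H(re^{i\theta})=izH'(z)=ih(z)$ and $\frac{\partial}{\partial\theta}\overline{G(re^{i\theta})}=\overline{izG'(z)}=i\,\overline{g(z)}$, one has the identity $\frac{\partial}{\partial\theta}F(re^{i\theta})=if(re^{i\theta})$, whence $\arg\{\frac{\partial}{\partial\theta}F(re^{i\theta})\}=\arg f(re^{i\theta})+\pi/2$ and condition \eqref{eq2.1} for $F$ is literally condition \eqref{eq2.3} for $f$ on every circle $|z|=r$; the one-to-one--convex versus one-to-one--starlike statements then match because in both cases the relevant argument increases strictly and by a total of $2\pi$. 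If you wish to keep your computation, present it as a remark that \eqref{eq2.2} and \eqref{eq2.4} correspond under the Alexander relations in the sense-preserving case, but base the proof of the theorem itself on the tangent identity.
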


This theorem provides an abundant examples of fully convex and fully starlike mappings of order $\alpha$ $(0\leq \alpha<1)$. For instance, since the functions $f_{n}$ defined in Example \ref{ex2.5} are fully starlike in $\mathbb{D}$, the functions $F_{n}(z)=z-{(1-\alpha)/(n(n+\alpha))\bar{z}^n}$ are fully convex of order $\alpha$. Similarly, since the function $L(r_{S}z)$ is fully starlike of order $\alpha$, $r_{S}=r_{S}(\alpha)$ being the radius of fully starlikeness of order $\alpha$ for $L$ determined in Example \ref{ex2.9}, the function
\[p(z)=\RE \frac{1}{1-r_{S}z}-i \arg (1-r_{S}z)\]
is fully convex of order $\alpha$.

The next example shows that Theorem \ref{th2.10} does not have full generality to the classes $\mathcal{FK}_{H}(\alpha)$ and $\mathcal{FS}^{*}_{H}(\alpha)$. \begin{example}
If $r_{C}=r_{C}(\alpha)$ is the radius of fully convexity of order $\alpha$ for the mapping $L$ determined in Example \ref{ex2.9}, then the function $F(z)=L(r_{C}z)/r_{C}=H(z)+\overline{G(z)} \in \mathcal{FK}_{H}(\alpha)$ and the corresponding function $f(z)=h(z)+\overline{g(z)}$, where
\[h(z)=zH'(z)=\frac{z}{(1-r_{C}z)^{3}}\quad \mbox{and}\quad g(z)=-zG'(z)=\frac{r_{C}z^2}{(1-r_{C}z)^3}\]
is not even locally univalent, since the Jacobian of $f$ given by
\[J_{f}(z)=\frac{1-r_{C}^2|z|^{2}}{|1-r_{C}z|^{8}}(1+r_{C}^2|z|^2+4r_{C}\RE z)\]
vanishes at $z=(2-\sqrt{3})/r_{C}$. However, by Theorem \ref{th2.10}, $f$ is fully starlike of order $\alpha$. It is easily seen that $f \in \mathcal{FS}^{*}_{H}(\alpha)$ for $|z|<(2-\sqrt{3})/r_{C}$.
\end{example}

The partial analogue of Theorem \ref{th2.10} to the classes  $\mathcal{FK}_{H}(\alpha)$ and $\mathcal{FS}^{*}_{H}(\alpha)$ is given in the following corollary.
\begin{corollary}\label{cor2.13}
If $f=h+\bar{g} \in \mathcal{FS}^{*}_{H}(\alpha)$ $(0\leq \alpha<1)$ and if $H$ and $G$ are the analytic functions defined by
\[zH'(z)=h(z),\quad zG'(z)=-g(z), \quad \mbox{and} \quad H(0)=G(0)=0\]
then $F=H+\bar{G} \in \mathcal{FK}_{H}(\alpha)$.
\end{corollary}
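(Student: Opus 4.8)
The plan is to read the statement off from Theorem~\ref{th2.10} and then upgrade the conclusion to genuine membership in the class $\mathcal{FK}_{H}(\alpha)$. Since $f=h+\bar g\in\mathcal{FS}^{*}_{H}(\alpha)$, the mapping $f$ is in particular fully starlike of order $\alpha$ in the sense of Definition~\ref{def2.4}. The analytic functions $H$ and $G$ prescribed in the corollary by $zH'(z)=h(z)$, $zG'(z)=-g(z)$ and $H(0)=G(0)=0$ are exactly the functions appearing in Theorem~\ref{th2.10}, the conditions $H(0)=G(0)=0$ merely fixing the constants of integration. Thus I would apply Theorem~\ref{th2.10} directly to obtain that $F=H+\bar G$ is fully convex of order $\alpha$.

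It then remains to verify that $F\in\mathcal K_{H}$. First, $F$ is correctly normalized: integrating $zH'(z)=h(z)=z+\sum_{n\ge 2}a_{n}z^{n}$ gives $H(z)=z+\sum_{n\ge 2}(a_{n}/n)z^{n}$, so $F(0)=0$ and $F_{z}(0)=H'(0)=1$, whence $F\in\mathcal H$. Next, by the Rad\'o--Kneser--Choquet theorem recorded after Definition~\ref{def2.1}, every fully convex harmonic mapping of order $\alpha$ is univalent, so $F$ is univalent in $\mathbb D$. To see that $F$ is sense-preserving I would invoke Lewy's theorem: a univalent harmonic mapping has nonvanishing Jacobian, and since $G'(0)=-b_{1}$ with $|b_{1}|<1$ (because $f\in\mathcal S^{*}_{H}\subset\mathcal S_{H}$), the Jacobian $J_{F}=|H'|^{2}-|G'|^{2}$ satisfies $J_{F}(0)=1-|b_{1}|^{2}>0$; being continuous and nonvanishing on the connected domain $\mathbb D$, it stays positive throughout. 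Finally, full convexity forces the image of each circle $|z|=r$ to be a convex curve, and letting $r\to 1^{-}$ exhibits $F(\mathbb D)$ as an increasing union of nested convex regions, hence convex. Therefore $F\in\mathcal K_{H}$, and being fully convex of order $\alpha$ it lies in $\mathcal{FK}_{H}(\alpha)$.

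I do not expect a serious obstacle, since the analytic core of the statement is already supplied by Theorem~\ref{th2.10}; the rest is bookkeeping about the defining requirements of $\mathcal K_{H}$. The one point needing a little care is the sense-preserving property of $F$, because full convexity of order $\alpha$ is a statement about the behaviour of $F$ on circles rather than about its dilatation, which is why I would route it through Lewy's theorem and the sign of $J_{F}(0)$ instead of attempting to prove $|g|<|h|$ directly. It is worth emphasizing, in contrast with the example immediately preceding the corollary, that the hypothesis $f\in\mathcal{FS}^{*}_{H}(\alpha)$ is used only through its full starlikeness: the reverse passage $\mathcal{FK}_{H}(\alpha)\to\mathcal{FS}^{*}_{H}(\alpha)$ fails precisely because the associated $f$ may lose local univalence, exactly as that example shows.
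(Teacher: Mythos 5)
Your proposal is correct and follows essentially the same route as the paper: apply Theorem~\ref{th2.10} to get full convexity of order $\alpha$, deduce univalence, and then use the nonvanishing of the Jacobian together with $J_{F}(0)=1-|g'(0)|^{2}>0$ to conclude that $F$ is sense-preserving and hence lies in $\mathcal{FK}_{H}(\alpha)$. Your extra remarks on the normalization of $H$ and on the convexity of $F(\mathbb{D})$ as a nested union of convex regions only make explicit what the paper leaves implicit.
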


\begin{proof}
By Theorem \ref{th2.10}, $F$ is fully convex of order $\alpha$ and hence univalent. By hypothesis, the Jacobian $J_{F}(z) \neq 0$ for each $z \in \mathbb{D}$ and since $J_{F}(0)=|H'(0)|^{2}-|G'(0)|^{2}=1-|g'(0)|^{2}>0$, it follows that $F$ is sense-preserving in $\mathbb{D}$ so that $F \in \mathcal{FK}_{H}(\alpha)$.
\end{proof}

Note that the exact radius of fully starlikeness of order $\alpha$ $(0\leq \alpha <1)$ for the subclasses $S^{*}_{H}$, $K_{H}$ and $C_{H}$ in $S_{H}$ is still unknown. The results in this direction are being investigated in the next section. However, if $\alpha=0$, we have the following result.

\begin{theorem}\label{th2.14}
Suppose that $f=h+\bar{g}\in \mathcal{S}_{H}$.
\begin{itemize}
  \item [$(i)$] If $f \in \mathcal{K}_{H}$ then $f$ is fully starlike in at least $|z|<4\sqrt{2}-5$;
  \item [$(ii)$] If $f \in \mathcal{C}_{H}$ then $f$ is fully starlike in at least $|z|<3-\sqrt{8}$;
  \item [$(iii)$] If $f \in \mathcal{S}_{H}^{*}$ then $f$ is fully starlike in at least $|z|<\sqrt{2}-1$.
\end{itemize}
\end{theorem}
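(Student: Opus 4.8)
The plan is to treat all three parts by one mechanism, exploiting the inclusions $\mathcal{K}_{H}\subset\mathcal{S}_{H}^{*}\subset\mathcal{C}_{H}$ so that the three stated radii correspond to progressively weaker hypotheses. First I would record two reductions. Because full starlikeness is a circle-by-circle condition, $f$ is fully starlike in $|z|<r$ if and only if $z\mapsto f(rz)/r$ is fully starlike in $\mathbb{D}$; this lets me feed the sharp coefficient inequalities \eqref{eq1.2} and \eqref{eq1.4} into the argument after replacing the $n$-th coefficient by its $r^{n-1}$-multiple. Second, since every real-linear automorphism of $\mathbb{C}$ fixing $0$ preserves both star-shapedness with respect to the origin and orientation, the affine change $f\mapsto(f-\overline{b_{1}f})/(1-|b_{1}|^{2})$ carries $\mathcal{S}_{H}^{*}$, $\mathcal{K}_{H}$, $\mathcal{C}_{H}$ into their normalized subclasses without altering the radius of full starlikeness, so I may assume $b_{1}=0$ and invoke \eqref{eq1.2}, \eqref{eq1.4}.

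Part $(ii)$ should then be immediate. It is noted just after Definition \ref{def2.4} that every fully convex mapping is fully starlike, and it is recalled in Section \ref{sec2} that an $f\in\mathcal{C}_{H}$ is fully convex in $|z|<3-\sqrt{8}$ (see \cite{sheilsmall}); combining these gives full starlikeness in $|z|<3-\sqrt{8}$ at once. Running the same crude argument for $\mathcal{S}_{H}^{*}$ and $\mathcal{K}_{H}$ only yields full starlikeness in $|z|<3-\sqrt{8}$ and $|z|<\sqrt{2}-1$ respectively, which is weaker than $(iii)$ and $(i)$; the real content of those two parts is to push the radius strictly past the radius of full convexity.

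For $(i)$ and $(iii)$ I would work from the exact characterization \eqref{eq2.4} with $\alpha=0$, equivalently from the sufficient condition of Theorem \ref{starlike}, and show either that the analytic functions $h+\epsilon g$ ($|\epsilon|=1$) are starlike in the stated disk, or directly that
\[|h|^{2}\,\RE\frac{zh'}{h}-|g|^{2}\,\RE\frac{zg'}{g}+\RE\bigl(zh(z)g'(z)-zh'(z)g(z)\bigr)\ge 0\]
throughout it. Using the class-specific distortion bounds together with the sense-preserving inequality $|g'|\le|h'|$, I would estimate each term on $|z|=\rho$, reduce the result to a single real variable (as in Example \ref{ex1.1} and Example \ref{ex2.9}, where $u=\cos\theta$ is eliminated at the critical direction), and read off the largest $\rho$ for which the inequality persists. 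The extremal mappings $K$ of \eqref{eq1.3} and $L$ of \eqref{eq1.5} signal that this critical $\rho$ solves a quadratic, giving $\sqrt{2}-1$ for $\mathcal{S}_{H}^{*}$ and $4\sqrt{2}-5$ for $\mathcal{K}_{H}$.

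The hard part will be obtaining estimates sharp enough to produce these clean radii. The conjugate and cross terms coming from the co-analytic part $g$ are exactly what make the problem genuinely harmonic: if they are bounded by the triangle inequality and the coefficient series alone, the condition collapses to Jahangiri's inequality of Lemma \ref{lem2.6} and returns only the much smaller radius ($\approx 0.11$ under \eqref{eq1.2}, $\approx 0.16$ under \eqref{eq1.4}) found for the coefficient family $\mathcal{F}$. To reach $\sqrt{2}-1$ and $4\sqrt{2}-5$ I must instead retain the cancellation between the $h$- and $g$-contributions, using the full geometric structure of $\mathcal{S}_{H}^{*}$ and $\mathcal{K}_{H}$ — their sharp growth and rotation theorems and the precise relation between $h$ and $g$ forced by starlikeness or convexity of the image. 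Verifying that the single-variable reduction attains its minimum exactly at the extremal boundary direction, as happens for $K$ and $L$, is the delicate computational step.
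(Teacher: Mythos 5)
Your part (ii) coincides with the paper's proof (fully convex in $|z|<3-\sqrt{8}$ implies fully starlike there), but for parts (i) and (iii) the proposal stops exactly where the proof has to begin. You correctly observe that feeding the coefficient bounds \eqref{eq1.2}, \eqref{eq1.4} into Lemma \ref{lem2.6} only yields radii $\approx 0.11$ and $\approx 0.16$, and you then defer the derivation of $4\sqrt{2}-5$ and $\sqrt{2}-1$ to an unexecuted ``delicate computational step'' based on distortion bounds and estimating \eqref{eq2.4}; nothing in the proposal shows that this computation closes or that it produces those constants. The actual argument for (i) is short and structural: by the Clunie--Sheil-Small shear theorem \cite[Theorem 5.7]{cluniesheilsmall}, $f=h+\bar{g}\in\mathcal{K}_{H}$ forces every analytic function $h+\epsilon g$, $|\epsilon|=1$, to be close-to-convex, and the classical radius of starlikeness of close-to-convex analytic functions is $4\sqrt{2}-5$; Theorem \ref{starlike} (applied to the dilations $f(rz)/r$) then converts starlikeness of all the $h+\epsilon g$ in $|z|<4\sqrt{2}-5$ into full starlikeness of $f$ there. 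You name Theorem \ref{starlike} as a possible tool but never supply the close-to-convexity step that is the only source of the number $4\sqrt{2}-5$.

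For (iii) the paper does not estimate \eqref{eq2.4} at all. It uses the harmonic Alexander correspondence: if $f=h+\bar{g}\in\mathcal{S}^{*}_{H}$, then $F=H+\bar{G}$ with $zH'=h$, $zG'=-g$, $H(0)=G(0)=0$ belongs to $\mathcal{K}_{H}$ (the lemma on p.~108 of \cite{duren}), hence is fully convex in $|z|<\sqrt{2}-1$ by the Ruscheweyh--Salinas result, and Theorem \ref{th2.10} transfers full convexity of $F$ on that disk to full starlikeness of $f$ on the same disk. Your plan for (iii) is the same open-ended computation as for (i), with no identified mechanism producing the constant $\sqrt{2}-1$. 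Finally, your preliminary normalization to $b_{1}=0$ so as to ``invoke \eqref{eq1.2}, \eqref{eq1.4}'' leads nowhere by your own admission (those bounds cannot reach the stated radii) and plays no role in a correct proof; the reductions that do matter are the two quoted above.
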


\begin{proof}
Firstly, we will prove $(i)$. Since $f \in \mathcal{K}_{H}$, the analytic functions $h+\epsilon g$ are close-to-convex for each $|\epsilon|=1$ by \cite[Theorem 5.7, p.15]{cluniesheilsmall}. Since the radius of starlikeness in close-to-convex analytic mappings is $4\sqrt{2}-5$, the functions $h+\varepsilon g$ are starlike in $|z|<4\sqrt{2}-5$. By Theorem \ref{starlike}, $f$ is fully starlike in $|z|<4\sqrt{2}-5$. This proves $(i)$. The proof of part $(ii)$ follows from the fact that $f \in \mathcal{C}_{H}$ is fully convex (and hence fully starlike) in $|z|<3-\sqrt{8}$. Regarding the proof of $(iii)$, if $f \in \mathcal{S}^{*}_{H}$ then by \cite[Lemma, p. 108]{duren}, the function $F=H+\overline{G}\in \mathcal{K}_{H}$, where $zH'(z)=h(z)$, $zG'(z)=-g(z)$, and $H(0)=G(0)=0$ so that $f$ is fully starlike in at least $|z|<\sqrt{2}-1$.
\end{proof}


\section{Radii Problems}\label{sec3}
In this section, we generalize the results given in \cite{ponnusamy} for fully starlike functions of order $\alpha$ and fully convex functions of order $\alpha$. The proof of the theorems follow from an easy modification of the proof of the corresponding results from \cite{ponnusamy}. For the sake of completeness, we include the details.  The following identities are quite useful in the proof of the theorems:
\begin{equation}\label{eq3.1}
\begin{split}
\frac{r}{(1-r)^2}&=\sum_{n=1}^{\infty}nr^{n}\quad\quad,\quad\quad \frac{r(1+r)}{(1-r)^3}=\sum_{n=1}^{\infty}n^{2}r^{n},\\
\frac{r(r^2+4r+1)}{(1-r)^4}&=\sum_{n=1}^{\infty}n^{3}r^{n},\quad\mbox{and}\quad \frac{r(1+r)(1+10r+r^{2})}{(1-r)^5}=\sum_{n=1}^{\infty}n^{4}r^{n}.
\end{split}
\end{equation}

\begin{theorem}\label{th3.1}
Let $h$ and $g$ have the form \eqref{eq1.1}, $0\leq \alpha<1$ and the coefficients of the series satisfy the conditions \eqref{eq1.2}. Then $f=h+\bar{g}$ is univalent and fully starlike of order $\alpha$ in the disk $|z|<r_{S}$, where $r_{S}=r_{S}(\alpha)$ is the real root of the equation
\begin{equation}\label{eq3.2}
2(1-\alpha)(1-r)^4+\alpha(1-r)^2-(1+r)^2=0
\end{equation}
in the interval $(0,1)$. Moreover, this result is sharp for each $\alpha \in [0,1)$.
\end{theorem}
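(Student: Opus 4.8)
The plan is to deduce the positive part of the statement from the coefficient criterion of Lemma \ref{lem2.6} applied to a dilation of $f$, and to prove sharpness by exhibiting an extremal member of the family. For $0<r<1$ put $f_{r}(z)=f(rz)/r=h_{r}(z)+\overline{g_{r}(z)}$, so that $h_{r}(z)=z+\sum_{n\ge2}a_{n}r^{n-1}z^{n}$ and $g_{r}(z)=\sum_{n\ge1}b_{n}r^{n-1}z^{n}$. Because full starlikeness of order $\alpha$ is determined by the images of the circles $|z|=\rho$, the function $f$ is univalent and fully starlike of order $\alpha$ in $|z|<r$ precisely when $f_{r}\in\mathcal{FS}^{*}_{H}(\alpha)$; univalence is then automatic since $\mathcal{FS}^{*}_{H}(\alpha)\subset\mathcal{S}_{H}$. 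By Lemma \ref{lem2.6}, a sufficient condition for $f_{r}\in\mathcal{FS}^{*}_{H}(\alpha)$ is that $\sum_{n\ge2}\frac{n-\alpha}{1-\alpha}|a_{n}|r^{n-1}+\sum_{n\ge1}\frac{n+\alpha}{1-\alpha}|b_{n}|r^{n-1}\le1$. The hypotheses \eqref{eq1.2} force $b_{1}=0$ and, after substituting the extremal values and noting that the quadratic-in-$n$ terms cancel, bound the left-hand side above by
\[
S(r,\alpha):=\frac{1}{6(1-\alpha)}\sum_{n=2}^{\infty}\bigl[\,4n^{3}+(2-6\alpha)n\,\bigr]r^{n-1}.
\]
Thus it suffices to show $S(r,\alpha)\le1$ for $r\le r_{S}$.

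I would then evaluate $S(r,\alpha)$ in closed form using the identities \eqref{eq3.1}. Clearing the factor $(1-r)^{4}$, the inequality $S(r,\alpha)\le1$ reduces to
\[
2(1-\alpha)(1-r)^{4}+\alpha(1-r)^{2}-(1+r)^{2}\ge0,
\]
whose left-hand side is exactly the polynomial in \eqref{eq3.2}. Since $S(r,\alpha)$ is a power series with positive coefficients it is strictly increasing on $(0,1)$, with $S(0,\alpha)=0$ and $S(r,\alpha)\to\infty$ as $r\to1^{-}$; hence the polynomial in \eqref{eq3.2} has a unique zero $r_{S}\in(0,1)$, is nonnegative exactly on $[0,r_{S}]$, and the displayed inequality holds throughout that range. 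Taking $r=r_{S}$ (equality being allowed in Lemma \ref{lem2.6}) establishes that $f$ is univalent and fully starlike of order $\alpha$ in $|z|<r_{S}$.

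For sharpness I would produce a function in the family whose argument derivative reaches the critical value $\alpha$ exactly at $r_{S}$. The plain harmonic Koebe function \eqref{eq1.3} is not extremal: evaluating $\partial_{\theta}\arg K$ at $z=-r$ gives $3(1-r)/(3+r^{2})$, and the equation $3(1-r)/(3+r^{2})=\alpha$ does not coincide with \eqref{eq3.2}, so the phases of the coefficients matter. The correct choice reverses the sign of the analytic part, namely $a_{n}=-(2n+1)(n+1)/6$ and $b_{n}=(2n-1)(n-1)/6$, which still meets \eqref{eq1.2}. With these signs every triangle inequality used to prove Lemma \ref{lem2.6} becomes an equality simultaneously at the real point $z=r$, and a direct computation gives
\[
\left.\frac{\partial}{\partial\theta}\arg f(re^{i\theta})\right|_{\theta=0}=\frac{r-\sum_{n\ge2}n\,|a_{n}|r^{n}-\sum_{n\ge2}n\,|b_{n}|r^{n}}{r-\sum_{n\ge2}|a_{n}|r^{n}+\sum_{n\ge2}|b_{n}|r^{n}}.
\]
This equals $\alpha$ precisely when $\sum_{n\ge2}(n-\alpha)|a_{n}|r^{n}+\sum_{n\ge2}(n+\alpha)|b_{n}|r^{n}=(1-\alpha)r$, i.e.\ when $S(r,\alpha)=1$ and hence $r=r_{S}$; for $r$ slightly larger the numerator decreases while the denominator stays positive, so the quantity drops below $\alpha$ and \eqref{eq2.3} fails on $|z|=r_{S}$. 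Therefore the radius cannot be enlarged.

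The closed-form evaluation of $S(r,\alpha)$ and its reconciliation with \eqref{eq3.2} is routine once the identities \eqref{eq3.1} are in hand. I expect the genuine obstacle to be the sharpness step: the obvious candidate (the harmonic Koebe function) is not extremal, and the crux is realising that flipping the sign of the analytic coefficients relative to the co-analytic ones is exactly what makes all the intermediate estimates in Lemma \ref{lem2.6} tight at one boundary point, thereby forcing the argument derivative down to $\alpha$ precisely at $r_{S}$.
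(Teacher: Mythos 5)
Your proposal is correct and follows essentially the same route as the paper: dilate to $f_r$, apply Lemma \ref{lem2.6} with the coefficient bounds \eqref{eq1.2}, reduce $S\le 1$ via the identities \eqref{eq3.1} to the polynomial \eqref{eq3.2}, and prove sharpness with the sign-flipped extremal function $a_n=-(n+1)(2n+1)/6$, $b_n=(n-1)(2n-1)/6$ by evaluating $\partial_\theta\arg f_0$ at $\theta=0$. The paper additionally records the Jacobian of $f_0$ and invokes Lewy's theorem to show failure of univalence beyond $r_S(0)$, but your argument that \eqref{eq2.3} fails at $r=r_S$ already suffices for the sharpness claim as stated.
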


\begin{proof}
The coefficient conditions \eqref{eq1.2} imply that $b_{1}=0$ and $h$ and $g$ are analytic in $\mathbb{D}$. Thus, $f=h+\bar{g}$ is harmonic in $\mathbb{D}$. Let $0<r<1$. It suffices to show that $f_{r} \in \mathcal{FS}^{*}_{H}(\alpha)$, where $f_{r}$ is defined by
\begin{equation}\label{eq3.3}
f_{r}(z)=\frac{f(rz)}{r}=z+\sum_{n=2}^{\infty}a_{n}r^{n-1}z^n+\sum_{n=2}^{\infty}b_{n}r^{n-1}z^n,\quad z\in \mathbb{D}.
\end{equation}
Consider the sum
\begin{equation}\label{eq3.4}
S=\sum_{n=2}^{\infty}\frac{n-\alpha}{1-\alpha}|a_{n}|r^{n-1}+\sum_{n=2}^{\infty}\frac{n+\alpha}{1-\alpha}|b_{n}|r^{n-1}.
\end{equation}
Using the coefficient bounds \eqref{eq1.2} and simplifying, we have
\[S \leq \frac{1}{3(1-\alpha)}\left[2\sum_{n=2}^{\infty} n^3 r^{n-1}+(1-3\alpha)\sum_{n=2}^{\infty}nr^{n-1}\right].\]
According to Lemma \ref{lem2.6}, we need to show that $S \leq 1$ or equivalently $r$ satisfies the inequality
\[2\sum_{n=2}^{\infty} n^3 r^{n-1}+(1-3\alpha)\sum_{n=2}^{\infty}nr^{n-1} \leq 3(1-\alpha).\]
Using the identities \eqref{eq3.1}, the last inequality reduces to
\[\frac{(1+r)^2}{(1-r)^4}-\frac{\alpha}{(1-r)^2}\leq 2(1-\alpha),\]
or
\[2(1-\alpha)(1-r)^4+\alpha(1-r)^2-(1+r)^2\geq 0.\]
Thus, by Lemma \ref{lem2.6}, $f_{r} \in \mathcal{FS}^{*}_{H}(\alpha)$ for $r\leq r_{S}$ where $r_{S}$ is the real root of \eqref{eq3.2} in $(0,1)$. In particular, $f$ is univalent and fully starlike of order $\alpha$ in $|z|<r_{S}$.

To prove the sharpness, consider the function $f_{0}(z)=h_{0}(z)+\overline{g_{0}(z)}$, where
\[h_{0}(z)=2z-\frac{z-\frac{1}{2}z^2+\frac{1}{6}z^3}{(1-z)^3}\quad \mbox{and}\quad g_{0}(z)=\frac{\frac{1}{2}z^2+\frac{1}{6}z^3}{(1-z)^3},\]
so that
\[f_{0}(z)=z-\frac{1}{6}\sum_{n=2}^{\infty}(n+1)(2n+1)z^{n}+\overline{\frac{1}{6}\sum_{n=2}^{\infty}(n-1)(2n-1)z^{n}}.\]
As $f_{0}$ has real coefficients, we obtain for $r \in (0,1)$
\begin{align*}
    J_{f_{0}}(r)&=(h_{0}'(r)+g_{0}'(r))(h_{0}'(r)-g_{0}'(r))\\
                &=\frac{(1-7r+6r^2-2r^3)(1-10r+11r^2-8r^3+2r^4)}{(1-r)^7}.
\end{align*}
Note that the roots of the equation \eqref{eq3.2} in $(0,1)$ are decreasing as a function of $\alpha \in [0,1)$. Consequently, $r_{S}(\alpha)\leq r_{S}(0)\approx0.112903$ and as $J_{f_{0}}(r_{S}(0))=0$, therefore in view of Lewy's theorem, the function $f_{0}$ is not univalent in $|z|<r$ if $r>r_{S}(0)$. Also, since
\[\left.\frac{\partial}{\partial\theta}\arg f_{0}(r e^{i\theta})\right|_{\theta=0}=\frac{rh_{0}'(r)-rg_{0}'(r)}{h_{0}(r)+g_{0}(r)}=\frac{1-10r+11r^2-8r^3+2r^4}{(1-r)^2(2r^2-4r+1)},\]
therefore if $z=r_{S}$, where $r_{S}$ is the real root of \eqref{eq3.2} in $(0,1)$, then
\[\left.\frac{\partial}{\partial\theta}\arg f_{0}(r e^{i\theta})\right|_{\theta=0,r=r_{S}}=\alpha,\]
showing that the bound $r_{S}$ is best possible.
\end{proof}

If $\alpha=0$ then Theorem \ref{th3.1} simplifies to \cite[Theorem 1.5]{ponnusamy}. Also, Theorem \ref{th3.1} readily gives the following corollary.
\begin{corollary}\label{cor3.2}
Let $f \in \mathcal{S}^{*0}_{H}$ (resp. $\mathcal{C}^{0}_{H}$) and $0 \leq \alpha <1$. Then $f$ is fully starlike of order $\alpha$ in at least $|z|<r_{S}$, where $r_{S}$ is the real root of \eqref{eq3.2} in $(0,1)$.
\end{corollary}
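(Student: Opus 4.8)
The plan is to deduce the corollary directly from Theorem \ref{th3.1} by checking that its coefficient hypothesis \eqref{eq1.2} is automatically met on the subclasses $\mathcal{S}^{*0}_{H}$ and $\mathcal{C}^{0}_{H}$. The entire analytic and radius computation — including the reduction via Lemma \ref{lem2.6} and the identities \eqref{eq3.1} — is already packaged inside Theorem \ref{th3.1}, so the only work is to verify the input hypothesis.

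First I would recall the coefficient facts stated in the introduction. Sheil-Small established the Clunie--Sheil-Small bounds \eqref{eq1.2} for every $f=h+\bar g \in \mathcal{S}^0_H$ whose image is starlike with respect to the origin, which is precisely the class $\mathcal{S}^{*0}_{H}$; and Wang, Liang and Zhang verified \eqref{eq1.2} for the close-to-convex members of $\mathcal{S}^0_H$, that is, for $\mathcal{C}^{0}_{H}$. Consequently, for $f \in \mathcal{S}^{*0}_{H}$ (resp. $f \in \mathcal{C}^{0}_{H}$), writing $f=h+\bar g$ with $h$ and $g$ as in \eqref{eq1.1}, the Taylor coefficients satisfy $|a_{n}|\leq (2n+1)(n+1)/6$ and $|b_{n}|\leq (2n-1)(n-1)/6$. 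In particular the case $n=1$ forces $b_{1}=0$, which is consistent with membership in $\mathcal{S}^0_H$ and with the normalization in \eqref{eq1.1}.

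With this verified, the hypotheses of Theorem \ref{th3.1} hold verbatim, and I would simply invoke it: $f$ is univalent and fully starlike of order $\alpha$ in $|z|<r_S$, where $r_S=r_S(\alpha)$ is the real root of \eqref{eq3.2} in $(0,1)$. This completes the argument.

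I expect no substantive obstacle, since all of the delicate estimation lives in Theorem \ref{th3.1}. The one point requiring care is the phrase \emph{at least} in the statement: because Lemma \ref{lem2.6} provides a sufficient but not necessary criterion for fully starlikeness of order $\alpha$, the number $r_S$ is only a guaranteed lower bound for the genuine radius of fully starlikeness of order $\alpha$ on $\mathcal{S}^{*0}_{H}$ and $\mathcal{C}^{0}_{H}$. Hence, unlike the sharp statement of Theorem \ref{th3.1} for the full coefficient family (where the extremal function $f_0$ is exhibited), one should not assert sharpness of $r_S$ for these two geometric subclasses.
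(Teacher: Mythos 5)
Your argument is correct and is exactly the route the paper intends: the coefficient bounds \eqref{eq1.2} are known to hold for starlike (Sheil-Small) and close-to-convex (Wang--Liang--Zhang) members of $\mathcal{S}_H^0$, so Theorem \ref{th3.1} applies verbatim; the paper states only that the corollary ``readily'' follows. Your closing remark on non-sharpness also matches the paper, which later observes via Theorem \ref{th2.14} that Corollary \ref{cor3.2} is not sharp for $\alpha=0$.
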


Proceeding in a similar manner as in Theorem \ref{th3.1} and invoking Lemma \ref{lem2.2} instead of Lemma \ref{lem2.6}, we have the following result.
\begin{theorem}\label{th3.3}
Under the hypothesis of Theorem \ref{th3.1}, $f=h+\overline{g}$ is univalent and fully convex of order $\alpha$ in the disk $|z|<r_{C}$, where $r_{C}=r_{C}(\alpha)$ is the real root of the equation
\begin{equation}\label{eq3.5}
2(1-\alpha)(1-r)^5+\alpha (1+r)(1-r)^2-(1+r)(r^2+6r+1)=0
\end{equation}
in the interval $(0,1)$. In particular, $f$ is univalent and fully convex in $|z|<r_{C}(0)\approx 0.0614313$.
\end{theorem}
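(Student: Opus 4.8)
The plan is to follow the proof of Theorem \ref{th3.1} verbatim in structure, replacing the starlikeness criterion of Lemma \ref{lem2.6} by the convexity criterion of Lemma \ref{lem2.2}. As there, the hypotheses force $b_1=0$ and keep $h$, $g$ analytic, so $f=h+\bar g$ is harmonic in $\mathbb{D}$; fixing $r\in(0,1)$, it suffices to show that the rescaled map $f_r(z)=f(rz)/r$ of \eqref{eq3.3} lies in $\mathcal{FK}_H(\alpha)$. Accordingly I would form the sum attached to Lemma \ref{lem2.2},
\[S=\sum_{n=2}^{\infty}\frac{n(n-\alpha)}{1-\alpha}|a_{n}|r^{n-1}+\sum_{n=2}^{\infty}\frac{n(n+\alpha)}{1-\alpha}|b_{n}|r^{n-1},\]
the $n=1$ term being absent because $b_1=0$, and aim to establish $S\le 1$ for $r\le r_C$.

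Next I would insert the coefficient bounds \eqref{eq1.2}. Since $(n-\alpha)(2n^2+3n+1)+(n+\alpha)(2n^2-3n+1)=4n^3+(2-6\alpha)n$, multiplying through by the additional factor $n$ supplied by Lemma \ref{lem2.2} produces $4n^4+(2-6\alpha)n^2$, whence
\[S\le \frac{1}{3(1-\alpha)}\sum_{n=2}^{\infty}\bigl[2n^4+(1-3\alpha)n^2\bigr]r^{n-1}.\]
Thus $S\le 1$ reduces to $2\sum_{n\ge 2}n^4r^{n-1}+(1-3\alpha)\sum_{n\ge 2}n^2r^{n-1}\le 3(1-\alpha)$, and here I would substitute the closed forms for $\sum n^4r^n$ and $\sum n^2r^n$ from \eqref{eq3.1} after stripping off the $n=1$ terms. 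This is exactly the quartic-order analogue of the cubic computation in Theorem \ref{th3.1}, which only needed $\sum n^3 r^n$ and $\sum n r^n$.

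The decisive step is the algebraic tidying. Placing everything over $(1-r)^5$, the numerator factors as $(1+r)\bigl[(r^2+6r+1)-\alpha(1-r)^2\bigr]$ — the same device that collapsed the starlike numerator to $(1+r)^2-\alpha(1-r)^2$ — so the inequality becomes
\[(1+r)(r^2+6r+1)-\alpha(1+r)(1-r)^2\le 2(1-\alpha)(1-r)^5,\]
which rearranges precisely to the demand that the left-hand side of \eqref{eq3.5} be nonnegative. I expect this factorisation, together with the observation that the polynomial in \eqref{eq3.5} takes the value $1-\alpha>0$ at $r=0$ and $-16<0$ at $r=1$ (so that its admissible region is the initial interval $(0,r_C]$ cut off by the root $r_C\in(0,1)$), to be the only genuine work; the remainder is bookkeeping. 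Lemma \ref{lem2.2} then yields $f_r\in\mathcal{FK}_H(\alpha)$ for $r\le r_C$, so $f$ is univalent and fully convex of order $\alpha$ in $|z|<r_C$; specialising to $\alpha=0$ and solving \eqref{eq3.5} numerically gives $r_C(0)\approx 0.0614313$.
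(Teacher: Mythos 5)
Your proposal is correct and is exactly the paper's route: the paper gives no separate proof of Theorem \ref{th3.3}, stating only that one proceeds as in Theorem \ref{th3.1} with Lemma \ref{lem2.2} in place of Lemma \ref{lem2.6}, which is precisely what you carry out (your coefficient combination $4n^4+(2-6\alpha)n^2$, the use of the $\sum n^4 r^n$ and $\sum n^2 r^n$ identities from \eqref{eq3.1}, and the reduction to \eqref{eq3.5} all check out, the common numerator being $3(1+r)\left[(r^2+6r+1)-\alpha(1-r)^2\right]$ against $6(1-\alpha)(1-r)^5$, which cancels the factor $3$).
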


The bound $r_{C}$ given by \eqref{eq3.5} is sharp by considering the function $f_{0}(z)=2z-K(z)$ where $K$ is given by \eqref{eq1.3}. In fact, as $f_{0}$ has real coefficients, we obtain
\[\left.\frac{\partial}{\partial \theta}\left(\arg \left\{\frac{\partial}{\partial \theta}f_{0}(r e^{i \theta})\right\}\right)\right|_{\theta=0,r=r_{C}}=\frac{1-17 r_{C}+13r^2_{C}-21 r_{C}^{3}+10r_{C}^{4}-2r_{C}^{5}}{(1-r_{c})^{2}(1-7r_{C}+6r_{C}^{2}-2r_{C}^{3})}=\alpha.\]
Theorem \ref{th3.3} immediately gives

\begin{corollary}\label{cor3.4}
Let $f \in \mathcal{S}^{*0}_{H}$ (resp. $\mathcal{C}^{0}_{H}$) and $0 \leq \alpha <1$. Then $f$ is fully convex of order $\alpha$ in at least $|z|<r_{C}$, where $r_{C}$ is the real root of \eqref{eq3.5} in $(0,1)$.
\end{corollary}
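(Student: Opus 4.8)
The plan is to deduce the corollary directly from Theorem~\ref{th3.3}, whose sole nontrivial hypothesis is that the Taylor coefficients of $h$ and $g$ obey the Clunie--Sheil-Small bounds \eqref{eq1.2}. Thus the whole task reduces to checking that every $f$ in either of the two classes satisfies \eqref{eq1.2}.

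First I would note that $\mathcal{S}^{*0}_{H}$ and $\mathcal{C}^{0}_{H}$ are both subclasses of $\mathcal{S}_{H}^{0}$, so for $f=h+\bar g$ in either class we have $f_{\bar z}(0)=0$ (equivalently $b_{1}=0$) and $h$, $g$ are of the form \eqref{eq1.1}. Then I would invoke the coefficient estimates recalled in the introduction: for a starlike $f\in\mathcal{S}^{*0}_{H}$ the bounds \eqref{eq1.2} are exactly those established by Sheil-Small \cite{sheilsmall} for functions in $\mathcal{S}_{H}^{0}$ whose image is starlike with respect to the origin, while for a close-to-convex $f\in\mathcal{C}^{0}_{H}$ they are the bounds verified by Wang, Liang and Zhang \cite{wang}. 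In both cases the hypotheses of Theorem~\ref{th3.1}, and therefore those of Theorem~\ref{th3.3}, are fulfilled, and Theorem~\ref{th3.3} yields at once that $f$ is univalent and fully convex of order $\alpha$ in $|z|<r_{C}$, with $r_{C}=r_{C}(\alpha)$ the real root of \eqref{eq3.5} in $(0,1)$. This is the assertion of the corollary.

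Since the argument is a direct application of an already-proved theorem, there is no genuine computational obstacle; the one point I would be careful about is the qualifier \emph{at least}. In Theorem~\ref{th3.3} the radius $r_{C}$ is sharp, but the extremal function $2z-K(z)$ used there attains \eqref{eq1.2} with equality and is \emph{not} a member of $\mathcal{S}^{*0}_{H}$ or $\mathcal{C}^{0}_{H}$ (indeed it need not even be univalent beyond $r_{C}$). Consequently $r_{C}$ provides only a guaranteed lower bound for the radius of fully convexity of order $\alpha$ of a given $f$ in these classes, and I would not claim sharpness here---the exact radius for these classes being, as the paper notes, still open.
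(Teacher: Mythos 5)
Your proposal is correct and is precisely the argument the paper intends: the corollary is stated as an immediate consequence of Theorem~\ref{th3.3}, the only point to verify being that functions in $\mathcal{S}^{*0}_{H}$ (by Sheil-Small \cite{sheilsmall}) and in $\mathcal{C}^{0}_{H}$ (by Wang, Liang and Zhang \cite{wang}) satisfy the coefficient bounds \eqref{eq1.2}. Your closing remark on non-sharpness also matches the paper, which notes explicitly that the bound in Corollary~\ref{cor3.4} is not sharp for $\alpha=0$.
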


It is clear that the result in Corollary \ref{cor3.4} is not sharp if $\alpha=0$. Corresponding to Theorem \ref{th3.1}, the next theorem determines the radius of univalence and fully starlikeness of order $\alpha$ for functions $f=h+\bar{g} \in \mathcal{H}$, where the Taylor coefficients of the series of $h$ and $g$ satisfy \eqref{eq1.4}.

\begin{theorem}\label{th3.5}
Let $h$ and $g$ have the form \eqref{eq1.1}, $0\leq \alpha<1$ and the coefficients of the series satisfy the conditions \eqref{eq1.4}. Then $f=h+\bar{g}$ is univalent and fully starlike of order $\alpha$ in the disk $|z|<r_{S}$, where $r_{S}=r_{S}(\alpha)$ is the real root of the equation
\begin{equation}\label{eq3.6}
(2-\alpha)(1-r)^3+\alpha  r(1-r)^2-1-r=0
\end{equation}
in the interval $(0,1)$. Moreover, this result is sharp for each $\alpha \in [0,1)$.
\end{theorem}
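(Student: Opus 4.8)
The plan is to mirror the proof of Theorem \ref{th3.1} exactly, substituting the coefficient bounds \eqref{eq1.4} for \eqref{eq1.2} and the summation identities for $\sum n r^{n}$ and $\sum n^2 r^{n}$ in place of those for $\sum n^3 r^{n}$. Since the conditions \eqref{eq1.4} force $b_{1}=0$ (as $|b_{1}|\leq 0$), the function $f=h+\bar g$ is genuinely harmonic, and as before it suffices to show $f_{r}\in\mathcal{FS}^{*}_{H}(\alpha)$ for $r\leq r_{S}$, where $f_{r}(z)=f(rz)/r$. Invoking Lemma \ref{lem2.6}, the task reduces to verifying that the sum
\[
S=\sum_{n=2}^{\infty}\frac{n-\alpha}{1-\alpha}|a_{n}|r^{n-1}+\sum_{n=2}^{\infty}\frac{n+\alpha}{1-\alpha}|b_{n}|r^{n-1}
\]
is at most $1$.

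The key computation is to insert $|a_{n}|\leq (n+1)/2$ and $|b_{n}|\leq (n-1)/2$ and collect terms. First I would write $(n-\alpha)(n+1)+(n+\alpha)(n-1)=2n^{2}-2\alpha=2n^2-2\alpha$, so that after factoring out $1/(2(1-\alpha))$ the sum is dominated by a combination of $\sum_{n\geq 2} n^{2}r^{n-1}$ and $\sum_{n\geq 2} r^{n-1}$; more carefully one gets a linear combination of $\sum n^2 r^{n-1}$ and $\sum n r^{n-1}$ once the $\alpha$-dependent cross terms are tracked. Then I would apply the identities \eqref{eq3.1} for $\sum_{n\geq 1} n^{2}r^{n}=r(1+r)/(1-r)^3$ and $\sum_{n\geq 1} n r^{n}=r/(1-r)^2$, remembering to subtract the $n=1$ contributions, and clear denominators by multiplying through by $(1-r)^3$. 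The inequality $S\leq 1$ should then collapse precisely to
\[
(2-\alpha)(1-r)^3+\alpha r(1-r)^2-1-r\geq 0,
\]
so that equality defines $r_{S}(\alpha)$ as in \eqref{eq3.6}, and Lemma \ref{lem2.6} gives $f_{r}\in\mathcal{FS}^{*}_{H}(\alpha)$ for all $r\leq r_{S}$.

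For sharpness I would take the extremal function $f_{0}=h_{0}+\overline{g_{0}}$ built from the half-plane mapping $L$ of \eqref{eq1.5}, namely $f_{0}(z)=2z-L(z)$ (the analogue of $f_{0}=2z-K(z)$ used after Theorem \ref{th3.3}), whose coefficients saturate \eqref{eq1.4}. Because $f_{0}$ has real coefficients, I can evaluate $\frac{\partial}{\partial\theta}\arg f_{0}(re^{i\theta})$ at $\theta=0$ as a ratio $r(h_{0}'(r)-g_{0}'(r))/(h_{0}(r)+g_{0}(r))$, reduce it to a rational function of $r$, and check that setting this equal to $\alpha$ reproduces \eqref{eq3.6}; simultaneously the Jacobian factor $h_{0}'(r)-g_{0}'(r)$ vanishing at the same radius shows, via Lewy's theorem, that univalence fails beyond $r_{S}$. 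The main obstacle will be purely computational bookkeeping: getting the $\alpha$-weighted combination of the two power-series identities to simplify to the clean cubic \eqref{eq3.6} without sign or factoring errors, and confirming that the extremal $f_{0}$ indeed has real coefficients attaining the bounds so that the boundary evaluation at $\theta=0$ is legitimately the minimizing direction.
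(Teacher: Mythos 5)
Your proposal follows the paper's proof essentially verbatim: bound the Jahangiri sum $S$ using \eqref{eq1.4}, note $(n-\alpha)(n+1)+(n+\alpha)(n-1)=2(n^2-\alpha)$ so that $S\leq\frac{1}{1-\alpha}[\sum_{n\geq2}n^2r^{n-1}-\alpha\sum_{n\geq2}r^{n-1}]$, reduce via \eqref{eq3.1} to \eqref{eq3.6}, and establish sharpness with $f_0=2z-L(z)$ exactly as the paper does. One small correction: your first instinct was right and your ``more careful'' second guess wrong --- the $\alpha$-term pairs with the plain geometric series $\sum_{n\geq2}r^{n-1}=r/(1-r)$, not with $\sum n r^{n-1}$, which is precisely how the paper arrives at $\frac{1+r}{(1-r)^3}-\frac{\alpha r}{1-r}\leq 2-\alpha$.
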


\begin{proof}
Following the notation and the method of the proof of Theorem \ref{th3.1}, it suffices to show that $f_{r} \in \mathcal{FS}^{*}_{H}(\alpha)$. Considering the sum \eqref{eq3.4} and using the coefficient bounds \eqref{eq1.4}, we have on simplification
\[S \leq \frac{1}{1-\alpha}\left[\sum_{n=2}^{\infty}n^2r^{n-1}-\alpha \sum_{n=2}^{\infty} r^{n-1}\right].\]
By Lemma \ref{lem2.6}, we need to show that $S \leq 1$ or equivalently
\[\sum_{n=2}^{\infty}n^2r^{n-1}-\alpha \sum_{n=2}^{\infty} r^{n-1} \leq 1-\alpha.\]
Using the identities \eqref{eq3.1}, the last inequality reduces to
\[\frac{1+r}{(1-r)^3}-\frac{\alpha r}{1-r}\leq 2-\alpha\]
or
\[(2-\alpha)(1-r)^3+\alpha  r(1-r)^2-1-r \geq 0.\]
Thus, by Lemma \ref{lem2.6}, we deduce that $f$ is univalent and fully starlike of order $\alpha$ in $|z|<r_{S}$, where $r_{S}$ is the real root of \eqref{eq3.6}.

The sharpness part of the theorem follows if we consider the function $f_{0}=2z-L(z)$, where $L$ is given by \eqref{eq1.5} so that
\[f_{0}(z)=z-\sum_{n=2}^{\infty}\frac{n+1}{2}z^{n}+\overline{\sum_{n=2}^{\infty}\frac{n-1}{2}z^{n}}.\]
As $f_{0}$ has real coefficients, we obtain for $r \in (0,1)$
\[J_{f_{0}}(r)=\frac{(1-4r+r^2)(1-7r+6r^2-2r^3)}{(1-r)^5}\]
Again observe that the roots of the equation \eqref{eq3.6} in $(0,1)$ are decreasing as a function of $\alpha \in [0,1)$. Consequently, $r_{S}(\alpha)\leq r_{S}(0)\approx0.16487$ and as $J_{f_{0}}(r_{S}(0))=0$, therefore by Lewy's theorem, we deduce that the function $f_{0}$ is not univalent in $|z|<r$ if $r>r_{S}(0)$. Also,
\[\left.\frac{\partial}{\partial\theta}\arg f_{0}(r e^{i\theta})\right|_{\theta=0,r=r_{S}}=\frac{1-7r_{S}+6r_{S}^2-2r_{S}^3}{(1-r_{S})^2(1-2r_{S})}=\alpha,\]
showing that the bound $r_{S}$ is best possible.
\end{proof}

Note that Theorem \ref{th3.5} reduces to \cite[Theorem 1.9]{ponnusamy} in case $\alpha=0$. Moreover, Theorem \ref{th3.5} quickly yields
\begin{corollary}\label{cor3.6}
Let $f \in \mathcal{K}_{H}^{0}$ and $0\leq \alpha<1$. Then $f$ is fully starlike of order $\alpha$ in at least $|z|<r_{S}$, where $r_{S}$ is the real root of \eqref{eq3.6} in $(0,1)$.
\end{corollary}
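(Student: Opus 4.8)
The plan is to observe that the corollary is a direct specialization of Theorem \ref{th3.5} to the subfamily $\mathcal{K}_{H}^{0}$. First I would recall that, by the Clunie--Sheil-Small coefficient estimates \eqref{eq1.4} quoted in the introduction, every function $f=h+\bar{g}\in\mathcal{K}_{H}^{0}$ has analytic and co-analytic parts whose Taylor coefficients satisfy $|a_{n}|\leq(n+1)/2$ and $|b_{n}|\leq(n-1)/2$ for all $n\geq 1$. In particular, the normalization $f_{\bar{z}}(0)=0$ forces $b_{1}=0$, which is consistent both with the bound $|b_{1}|\leq 0$ coming from \eqref{eq1.4} and with the form \eqref{eq1.1} required as a hypothesis in Theorem \ref{th3.5}.

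Having verified that the hypotheses of Theorem \ref{th3.5} hold for any $f\in\mathcal{K}_{H}^{0}$, I would simply invoke that theorem to conclude that $f$ is univalent and fully starlike of order $\alpha$ in $|z|<r_{S}$, where $r_{S}=r_{S}(\alpha)$ is the real root of \eqref{eq3.6} in $(0,1)$. No genuine obstacle arises here, since all the analytic work has already been carried out in the proof of Theorem \ref{th3.5}; the computation of the sum \eqref{eq3.4}, the reduction via the identities \eqref{eq3.1}, and the appeal to Lemma \ref{lem2.6} are exactly what establish the inclusion, and they depend on the coefficients only through the bounds \eqref{eq1.4}.

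The one point worth flagging is the phrase \emph{at least} in the statement. It signals that $r_{S}$ is not asserted to be the \emph{exact} radius of fully starlikeness of order $\alpha$ for the class $\mathcal{K}_{H}^{0}$. The extremal function used to prove sharpness of Theorem \ref{th3.5} is $f_{0}=2z-L(z)$, which is not itself a convex harmonic mapping; hence, although the bound $r_{S}$ is sharp for the full family of functions obeying the coefficient inequalities \eqref{eq1.4}, it may well be improvable once one restricts to the genuinely convex subfamily $\mathcal{K}_{H}^{0}$. For this reason the corollary claims only a lower bound on the radius, and no sharpness assertion is made.
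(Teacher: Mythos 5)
Your proposal is correct and is exactly the paper's (implicit) argument: the coefficient bounds \eqref{eq1.4} of Clunie and Sheil-Small hold for every $f\in\mathcal{K}_{H}^{0}$, so Theorem \ref{th3.5} applies verbatim. Your remark on the non-sharpness of $r_{S}$ for the convex subclass also matches the paper, which notes the expected improvement via Conjecture A and, for $\alpha=0$, via Theorem \ref{th2.14}.
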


It is expected that Corollary \ref{cor3.6} can be further improved and since the harmonic half-plane mapping $L$ given by \eqref{eq1.5} is extremal in $\mathcal{K}_{H}^{0}$, therefore Examples \ref{ex1.1} and \ref{ex2.9} motivates the following conjecture:

\textbf{Conjecture A.} If $f \in \mathcal{K}_{H}^{0}$, then $f$ is fully starlike of order $\alpha$ ($0\leq \alpha<1$) in $|z|<r_{S}$ where $r_{S}=r_{S}(\alpha)$ is given by
\[r_{S}(\alpha)=\left\{
                  \begin{array}{ll}
                    \dfrac{\sqrt{1+8\alpha}-(1+2\alpha)}{2\alpha}, & \hbox{if $\alpha \in(0,1)$;} \\
                    \sqrt{\dfrac{7\sqrt{7}-17}{2}}, & \hbox{if $\alpha=0$.}
                  \end{array}
                \right.\]

Theorem \ref{th2.14} shows that the results in Corollaries \ref{cor3.2} and \ref{cor3.6} are not sharp if $\alpha=0$. Using Lemma \ref{lem2.2} and proceeding in a similar manner as in Theorem \ref{th3.5}, we obtain the following result.

\begin{theorem}\label{th3.7}
Under the hypothesis of Theorem \ref{th3.5}, $f=h+\bar{g}$ is univalent and fully convex of order $\alpha$ in the disk $|z|<r_{C}$, where $r_{C}=r_{C}(\alpha)$ is the real root of the equation
\begin{equation}\label{eq3.7}
2(1-\alpha)(1-r)^4+\alpha(1-r)^2-(r^2+4r+1)=0
\end{equation}
in the interval $(0,1)$. In particular, $f$ is univalent and fully convex in $|z|<r_{C}(0)\approx0.0903331$.
\end{theorem}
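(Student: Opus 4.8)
The plan is to mirror the proof of Theorem \ref{th3.5} almost verbatim, replacing the starlikeness criterion of Lemma \ref{lem2.6} by the convexity criterion of Lemma \ref{lem2.2}. As in that proof, the bounds \eqref{eq1.4} force $b_{1}=0$ (since $|b_{1}|\le 0$) and make $h$ and $g$ analytic, so $f=h+\bar g$ is harmonic in $\mathbb{D}$. Fixing $0<r<1$ and passing to the dilation $f_{r}(z)=f(rz)/r$, whose analytic and co-analytic coefficients are $a_{n}r^{n-1}$ and $b_{n}r^{n-1}$, it suffices to show that $f_{r}\in\mathcal{FK}_{H}(\alpha)$.

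To this end I would form the convexity sum dictated by Lemma \ref{lem2.2},
\[
S=\sum_{n=2}^{\infty}\frac{n(n-\alpha)}{1-\alpha}|a_{n}|r^{n-1}+\sum_{n=2}^{\infty}\frac{n(n+\alpha)}{1-\alpha}|b_{n}|r^{n-1},
\]
and estimate it using \eqref{eq1.4}. The decisive simplification is that the two coefficient contributions combine cleanly: since
\[
(n+1)(n-\alpha)+(n-1)(n+\alpha)=2n^{2}-2\alpha,
\]
one obtains
\[
S\le\frac{1}{1-\alpha}\sum_{n=2}^{\infty}(n^{3}-\alpha n)\,r^{n-1},
\]
so that the requirement $S\le 1$ of Lemma \ref{lem2.2} reduces to
\[
\sum_{n=2}^{\infty}n^{3}r^{n-1}-\alpha\sum_{n=2}^{\infty}n\,r^{n-1}\le 1-\alpha .
\]

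Next I would evaluate the two series in closed form with the identities \eqref{eq3.1} (dividing the tabulated sums by $r$ and subtracting the $n=1$ terms), which turns the displayed inequality into
\[
\frac{r^{2}+4r+1}{(1-r)^{4}}-\frac{\alpha}{(1-r)^{2}}\le 2(1-\alpha).
\]
Clearing the denominator $(1-r)^{4}$ rewrites this as
\[
2(1-\alpha)(1-r)^{4}+\alpha(1-r)^{2}-(r^{2}+4r+1)\ge 0,
\]
that is, the left-hand side of \eqref{eq3.7} is non-negative. Hence Lemma \ref{lem2.2} yields $f_{r}\in\mathcal{FK}_{H}(\alpha)$ precisely when $r$ does not exceed the relevant root of \eqref{eq3.7}, giving the asserted radius $r_{C}$; the numerical value $r_{C}(0)\approx 0.0903331$ follows by solving $2(1-r)^{4}=r^{2}+4r+1$.

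The only point needing a word of justification, rather than a genuine obstacle, is that \eqref{eq3.7} indeed possesses a root in $(0,1)$ cutting off the region where the inequality holds. Writing $P(r)$ for the left-hand side of \eqref{eq3.7}, one checks $P(0)=1-\alpha>0$ while $P(1)=-6<0$, so a root $r_{C}\in(0,1)$ exists by continuity; taking $r_{C}$ to be the smallest such root and using $P(0)>0$, continuity gives $P\ge 0$ throughout $[0,r_{C}]$, which is exactly what the estimate requires. Since no sharpness is claimed here, no extremal-function analysis is needed, and the proof is complete once these routine computations are assembled.
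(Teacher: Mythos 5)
Your proposal is correct and follows exactly the route the paper intends: the paper gives no separate proof of Theorem \ref{th3.7}, merely remarking that one applies Lemma \ref{lem2.2} in place of Lemma \ref{lem2.6} and repeats the argument of Theorem \ref{th3.5}, which is precisely what you carry out (and your algebra, including the identity $(n+1)(n-\alpha)+(n-1)(n+\alpha)=2n^{2}-2\alpha$ and the reduction to \eqref{eq3.7} via \eqref{eq3.1}, checks out). Your added verification that $P(0)=1-\alpha>0$ and $P(1)=-6<0$ is a harmless extra detail the paper leaves implicit.
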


The radius bound $r_{C}$ given by \eqref{eq3.7} is sharp for each $\alpha \in [0,1)$ by considering the function $f_{0}(z)=h_{0}(z)+\overline{g_{0}(z)}$, where
\[h_{0}(z)=2z-\frac{1}{2}\left(\frac{z}{1-z}+\frac{z}{(1-z)^2}\right)\quad\mbox{and}\quad g_{0}(z)=\frac{1}{2}\left(\frac{z}{1-z}-\frac{z}{(1-z)^2}\right)\]
and noticing that
\[\left.\frac{\partial}{\partial \theta}\left(\arg \left\{\frac{\partial}{\partial \theta}f_{0}(r e^{i \theta})\right\}\right)\right|_{\theta=0,r=r_{C}}=\frac{1-12r_{C}+11r_{C}^2-8r_{C}^3+2r_{C}^4}{(1-r_{C})^2(1-4r_{C}+2r_{C}^2)}=\alpha.\]

An immediate corollary to Theorem \ref{th3.7} states that
\begin{corollary}\label{cor3.8}
If $f \in \mathcal{K}_{H}^{0}$ and $0\leq \alpha<1$, then $f$ is fully convex of order $\alpha$ in $|z|<r_{C}$, where $r_{C}$ is the real root of \eqref{eq3.7}.
\end{corollary}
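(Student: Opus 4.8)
The plan is to reduce the corollary directly to Theorem \ref{th3.7} by verifying that every member of $\mathcal{K}_H^0$ automatically satisfies the coefficient hypothesis of that theorem, so that no new estimation is required.

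First I would recall that Clunie and Sheil-Small established in \cite{cluniesheilsmall} that for any $f=h+\bar g \in \mathcal{K}_H^0$ the Taylor coefficients of $h$ and $g$ obey the bounds \eqref{eq1.4}, namely $|a_n|\leq (n+1)/2$ and $|b_n|\leq (n-1)/2$ for all $n\geq 1$. This is precisely the fact recorded in the introduction. In particular, the case $n=1$ forces $b_1=0$, which is consistent with the normalization $f_{\bar z}(0)=0$ built into the definition of $\mathcal{K}_H^0$; thus $h$ and $g$ genuinely have the form \eqref{eq1.1}.

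Given this, the analytic parts of an arbitrary $f\in\mathcal{K}_H^0$ satisfy exactly the hypotheses of Theorem \ref{th3.7}: they are of the form \eqref{eq1.1} and their coefficients obey \eqref{eq1.4}. Applying that theorem then yields immediately that $f$ is (univalent and) fully convex of order $\alpha$ in the disk $|z|<r_C$, where $r_C=r_C(\alpha)$ is the real root of \eqref{eq3.7} in $(0,1)$, which is the assertion of the corollary.

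There is no substantial obstacle here; the single point requiring care is that the inequalities \eqref{eq1.4} are genuinely \emph{necessary} conditions for membership in $\mathcal{K}_H^0$, which is the content of the cited Clunie--Sheil-Small result. I would not expect the bound $r_C$ to be sharp for the class $\mathcal{K}_H^0$, since the extremal function driving the sharpness in Theorem \ref{th3.7} is $2z-L(z)$, which is not itself a convex harmonic mapping. Indeed, for $\alpha=0$ the true radius of full convexity for $\mathcal{K}_H^0$ is $\sqrt{2}-1$ (as noted in Section \ref{sec2}), far exceeding $r_C(0)\approx 0.0903331$, so the corollary is not best possible in the harmonic convex class.
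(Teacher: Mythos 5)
Your argument is correct and is exactly the route the paper takes: Corollary \ref{cor3.8} is stated there as an immediate consequence of Theorem \ref{th3.7}, the only input being the Clunie--Sheil-Small coefficient bounds \eqref{eq1.4} for the class $\mathcal{K}_{H}^{0}$, which is precisely what you verify. Your closing remark about non-sharpness also matches the paper, which notes that the bound is not sharp for $\alpha=0$.
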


It is known that the result given in Corollary \ref{cor3.8} is not sharp if $\alpha=0$. Since the harmonic half-plane mapping $L$ given by \eqref{eq1.5} gives the sharp bound for $\alpha=0$, therefore Example \ref{ex2.9} motivates the following conjecture:

\textbf{Conjecture B.} If $f \in \mathcal{K}_{H}^{0}$, then $f$ is fully convex of order $\alpha$ ($0\leq \alpha<1$) in $|z|<r_{S}$ where $r_{C}=r_{C}(\alpha)$ is the positive root of the equation $p(r,u_{0})=0$ in $(0,1)$ with
\[p(r,u)=1-6r^2+r^4+12r^2 u^2-4r(1+r^2)u^3-\alpha [1+(2u^2-3)\{4u(1+r^2)-6r\}r+r^4],\]
and
\[u_{0}=\frac{r(1+\alpha)-\sqrt{\alpha(1+2\alpha)(1+r^4)+(1+4\alpha+5\alpha^2)r^2}}{(1+r^2)(1+2\alpha)}.\]
For $\alpha=0$, this conjecture has been solved (see \cite{ruscheweyh}).


\section{Harmonic convolution}\label{sec4}
Consider the half-plane mapping $L$ in $\mathcal{K}_{H}^{0}\subset \mathcal{K}_{H}$, given by \eqref{eq1.5}. The coefficients of the product $L*L$ are too large for this product to be in $\mathcal{K}_{H}$. In fact, the image of the unit disk $\mathbb{D}$ under $L*L$ is $\mathbb{C}\backslash(-\infty,-1/4]$, which is not a convex domain. However, $L*L \in \mathcal{S}_{H}^{*0}$, by \cite[Theorem 3]{dorff2}. Consider the following example.

\begin{example}\label{ex4.1}
We show that $L*L$ maps the subdisks $|z|<r$ onto convex domain precisely for $r \leq 2-\sqrt{3}$. Since we can write $L$ as
\[L(z)=\frac{1}{2}\frac{z}{1-z}+\frac{1}{2}\frac{z}{(1-z)^2}+\overline{\frac{1}{2}\frac{z}{1-z}-\frac{1}{2}\frac{z}{(1-z)^2}},\]
and using the fact that for an analytic function $\varphi$ with $\varphi(0)=0$, we have
\[\frac{z}{1-z}*\varphi(z)=\varphi(z)\quad\mbox{and}\quad \frac{z}{(1-z)^2}*\varphi(z)=z\varphi'(z),\]
it follows that
\[(L*L)(z)=\frac{1}{4}\frac{z}{1-z}+\frac{1}{2}\frac{z}{(1-z)^2}+\frac{1}{4}\frac{z(1+z)}{(1-z)^3}+
\overline{\frac{1}{4}\frac{z}{1-z}-\frac{1}{2}\frac{z}{(1-z)^2}+\frac{1}{4}\frac{z(1+z)}{(1-z)^3}}.\]

\begin{figure}[hb]
\centering
\includegraphics[width=2.5in]{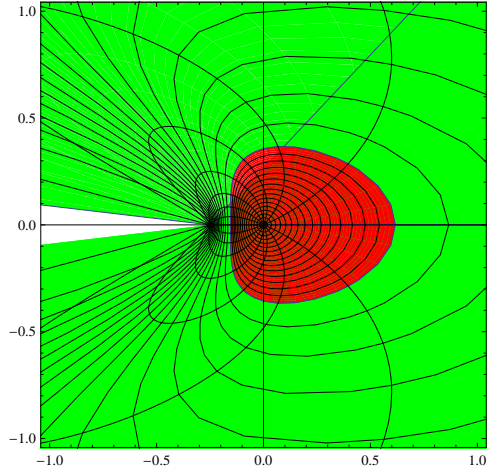}
\caption{Image of the subdisk $|z|<2-\sqrt{3}$ under the mapping $L*L$.}\label{fig2}
\end{figure}

After simplification, we get
\[(L*L)(z)=\frac{1}{2}\RE \frac{z(z^{2}-z+2)}{(1-z)^{3}}+i\IM \frac{z}{(1-z)^{2}}.\]
To prove our assertion, it will be necessary to study the change of the tangent direction
\[\Psi_{r}(\theta)=\arg\left\{\frac{\partial}{\partial\theta}(L*L)(re^{i\theta})\right\}\]
of the image curve as the point $z= r e^{i\theta}$ moves around the circle $|z|=r$. Note that
\[\frac{\partial}{\partial\theta}(L*L)(re^{i\theta})=A(r,\theta)+iB(r,\theta),\]
where
\[|1-z|^{8}A(r,\theta)=-r(1-r^{2})[r^{2}\sin{3\theta}+r(1+r^{2})\sin{2\theta}+(1-9r^{2}+r^{4})\sin{\theta}]\]
and
\[|1-z|^{6}B(r,\theta)=r(1-r^{2})[(1+r^{2})\cos{\theta}-2r (1+\sin^{2}{\theta})],\]
so that
\[\tan \Psi_{r}(\theta)=\frac{B(r,\theta)}{A(r,\theta)}=
\frac{(1-2r\cos{\theta}+r^{2})[2r(1+\sin^{2}{\theta})-(1+r^{2})\cos{\theta}]}{r^{2}\sin{3\theta}+r(1+r^{2})\sin{2\theta}+(1-9r^{2}+r^{4})\sin{\theta}}.\]
A lengthy calculation leads to an expression for the derivative in the form
\[[4r^2 u^2+2r(1+r^2)u+(1-10r^2+r^4)]^{2}\frac{\partial}{\partial\theta}\tan \Psi_{r}(\theta)=q(r,u),\]
where $u=\cos \theta$ and
\begin{align*}
q(r,u)=1+2u(u^2-2)r&+8(1-4u^2+2u^4)r^2+2u(34-21u^2+4u^4)r^3-2(41-24u^2+8u^4)r^4\\
&+2u(34-21u^2+4u^4)r^5+8(1-4u^2+2u^4)r^6+2u(u^2-2)r^7+r^8.
\end{align*}
Observe that the roots of $q(r,u)=0$ in $(0,1)$ are increasing as a function of $u \in [-1,1]$. Consequently, it follows that $q(r,u)\geq 0$ for $-1\leq u\leq 1$ if and only if
\[q(r,-1)=1+2r-8r^2-34r^3-50r^4-34r^5-8r^6+2r^7+r^8\geq 0.\]
This inequality implies that $r\leq 2-\sqrt{3}$. This proves that the tangent angle $\Psi_{r}(\theta)$ increases monotonically with $\theta$ if $r\leq 2-\sqrt{3}$ but is not monotonic for $2-\sqrt{3}<r<1$. Thus, the harmonic mapping $L*L$ sends each disk $|z|<r\leq 2-\sqrt{3}$ to a convex region, but the image is not convex when $2-\sqrt{3}<r<1$ (see Figure \ref{fig2}).
\end{example}

\begin{theorem}\label{th4.2}
Let $h$ and $g$ have the form \eqref{eq1.1}, $0\leq \alpha<1$ and the coefficients of the series satisfy the conditions
\[|a_{n}|\leq\left(\frac{n+1}{2}\right)^{2}\quad\mbox{and}\quad|b_{n}|\leq\left(\frac{n-1}{2}\right)^2,\quad \mbox{for all }n \geq 1.\]
Then $f=h+\bar{g}$ is univalent and fully starlike of order $\alpha$ in the disk $|z|<r_{0}$, where $r_{0}=r_{0}(\alpha)$ is the real root of the equation
\begin{equation}\label{eq4.1}
2(1-\alpha)(1-r)^4+\alpha(1-r)^2-(r^2+r+1)=0
\end{equation}
in the interval $(0,1)$. In particular, $f$ is univalent and fully starlike in $|z|<r_{0}$, where $r_{0}=r_{0}(0)\approx0.129831$ is the root of the biquadratic equation $2r^4-8r^3+11r^2-9r+1=0$. Moreover, the result is sharp for each $\alpha \in [0,1)$.
\end{theorem}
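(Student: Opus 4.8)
The plan is to follow verbatim the scheme of Theorems \ref{th3.1} and \ref{th3.5}, reducing everything to the coefficient criterion of Lemma \ref{lem2.6}. First I would observe that the hypotheses force $|b_1|\le 0$, so $b_1=0$ and both $h$ and $g$ are analytic in $\mathbb{D}$; hence $f=h+\bar g$ is harmonic there. Fixing $0<r<1$ and passing to the dilation $f_r(z)=f(rz)/r$ as in \eqref{eq3.3}, it suffices to show $f_r\in\mathcal{FS}^{*}_{H}(\alpha)$, for which by Lemma \ref{lem2.6} I only need the sum $S$ of \eqref{eq3.4} to satisfy $S\le 1$.

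The heart of the argument is a single coefficient computation. Inserting the bounds $|a_n|\le((n+1)/2)^2$ and $|b_n|\le((n-1)/2)^2$ into \eqref{eq3.4} and combining the two series termwise gives
\[
(n-\alpha)(n+1)^2+(n+\alpha)(n-1)^2 = 2\bigl[n^3+(1-2\alpha)n\bigr],
\]
so that $S\le \frac{1}{2(1-\alpha)}\sum_{n=2}^\infty\bigl[n^3+(1-2\alpha)n\bigr]r^{n-1}$. The requirement $S\le 1$ is therefore implied by $\sum_{n=2}^\infty[n^3+(1-2\alpha)n]r^{n-1}\le 2(1-\alpha)$. Summing the two series by means of the identities \eqref{eq3.1} turns this into the rational inequality
\[
\frac{r^2+4r+1}{(1-r)^4}+\frac{1-2\alpha}{(1-r)^2}\le 4(1-\alpha),
\]
i.e. $4(1-\alpha)(1-r)^4-(1-2\alpha)(1-r)^2-(r^2+4r+1)\ge 0$. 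The point I would stress is the algebraic identity
\[
4(1-\alpha)(1-r)^4-(1-2\alpha)(1-r)^2-(r^2+4r+1) = 2\bigl[2(1-\alpha)(1-r)^4+\alpha(1-r)^2-(r^2+r+1)\bigr],
\]
which shows that the inequality is exactly \eqref{eq4.1}$\,\ge 0$. Since the left side of \eqref{eq4.1} equals $1-\alpha>0$ at $r=0$ and tends to $-3$ as $r\to 1^-$, it has a root $r_0(\alpha)\in(0,1)$, the largest $r$ for which Lemma \ref{lem2.6} applies; this yields fully starlikeness of order $\alpha$, and hence univalence, on $|z|<r_0$. As with the earlier theorems the roots decrease in $\alpha$, so $r_0(\alpha)\le r_0(0)$, and at $\alpha=0$ equation \eqref{eq4.1} collapses to the biquadratic $2r^4-8r^3+11r^2-9r+1=0$.

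For sharpness I would take the extremal member of the family obtained by loading every coefficient to its maximal modulus with the destabilizing signs, namely $f_0=h_0+\overline{g_0}$ with analytic coefficients $a_n=-((n+1)/2)^2$ and co-analytic coefficients $b_n=((n-1)/2)^2$. In closed form these are $h_0(z)=2z-h_{L*L}(z)$ and $g_0(z)=g_{L*L}(z)$, where $h_{L*L}$ and $g_{L*L}$ are the analytic and co-analytic parts of $L*L$ computed in Example \ref{ex4.1}. Because $f_0$ has real coefficients, on the positive axis the Jacobian factors as $J_{f_0}(r)=(h_0'(r)+g_0'(r))(h_0'(r)-g_0'(r))$, and evaluating
\[
\left.\frac{\partial}{\partial\theta}\arg f_0(re^{i\theta})\right|_{\theta=0}=\frac{r\bigl(h_0'(r)-g_0'(r)\bigr)}{h_0(r)+g_0(r)}
\]
at $r=r_0(\alpha)$ should return exactly $\alpha$, while $J_{f_0}(r_0(0))=0$ gives, via Lewy's theorem, the failure of univalence beyond $r_0(0)$ when $\alpha=0$.

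I expect the main obstacle to be bookkeeping rather than conceptual: first, recognizing and verifying the factor-of-two identity that collapses the naturally occurring quartic into the stated form \eqref{eq4.1} — without it the two expressions look unrelated; and second, the sharpness computation, where one must assemble $h_0,g_0$ in closed form, differentiate, and confirm that the resulting rational function of $r$ equals $\alpha$ precisely at the root of \eqref{eq4.1}. Both are routine but error-prone algebraic simplifications.
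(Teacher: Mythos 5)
Your proposal is correct and follows essentially the same route as the paper: dilation to $f_r$, the coefficient sum $S$ of \eqref{eq3.4} bounded via Lemma \ref{lem2.6} and the identities \eqref{eq3.1}, and sharpness via $f_0=2z-(L*L)$ with the Jacobian/argument computation at $\theta=0$. The only (cosmetic) difference is that the paper combines $\sum n^3r^{n-1}+\sum nr^{n-1}$ into $\tfrac{r^2+r+1}{(1-r)^4}-\tfrac{\alpha}{(1-r)^2}\le 2(1-\alpha)$ directly, whereas you keep the form $\tfrac{r^2+4r+1}{(1-r)^4}+\tfrac{1-2\alpha}{(1-r)^2}\le 4(1-\alpha)$ and then verify the factor-of-two identity; both reductions are correct and equivalent.
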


\begin{proof}
Following the method of the proof of Theorem \ref{th3.1}, it suffices to show that $f_{r} \in \mathcal{FS}^{*}_{H}(\alpha)$, where $f_{r}$ is defined by \eqref{eq3.3}. Considering the sum \eqref{eq3.4} and using the coefficient bounds, we have
\[S \leq \frac{1}{2(1-\alpha)}\left[\sum_{n=2}^{\infty}n^3r^{n-1}+(1-2\alpha)\sum_{n=2}^{\infty}nr^{n-1}\right].\]
According to Lemma \ref{lem2.6}, we need to show that $S\leq1$ or equivalently $r$ satisfies the inequality
\[\sum_{n=2}^{\infty}n^3r^{n-1}+(1-2\alpha)\sum_{n=2}^{\infty}nr^{n-1}\leq 2(1-\alpha).\]
Using the identities \eqref{eq3.1}, the last inequality reduces to
\[\frac{r^2+r+1}{(1-r)^4}-\frac{\alpha}{(1-r)^2}\leq 2(1-\alpha),\]
or
\[2(1-\alpha)(1-r)^4+\alpha(1-r)^2-(r^2+r+1)\geq 0.\]
Thus, by Lemma \ref{lem2.6}, $f_{r} \in \mathcal{FS}^{*}_{H}(\alpha)$ for $r\leq r_{0}$ where $r_{0}=r_{0}(\alpha)$ is the real root of the equation \eqref{eq4.1} in $(0,1)$. In particular, $f$ is univalent and fully starlike of order $\alpha$ in $|z|<r_{0}$.

Next, to prove the sharpness of the statement of the theorem, we consider the function
\[f_{0}(z)=h_{0}(z)+\overline{g_{0}(z)},\]
with
\[h_{0}(z)=2z-\frac{1}{4}\frac{z}{1-z}-\frac{1}{2}\frac{z}{(1-z)^2}-\frac{1}{4}\frac{z(1+z)}{(1-z)^3}\quad \mbox{and}\quad g_{0}(z)=\frac{1}{4}\frac{z}{1-z}-\frac{1}{2}\frac{z}{(1-z)^2}+\frac{1}{4}\frac{z(1+z)}{(1-z)^3}.\]
We note that
\[f_{0}(z)=z-\sum_{n=2}^{\infty}\left(\frac{n+1}{2}\right)^2z^{n}+\overline{\sum_{n=2}^{\infty}\left(\frac{n-1}{2}\right)^2z^{n}}.\]
As $f_{0}$ has real coefficients, we obtain that for $r \in (0,1)$
\begin{align*}
    J_{f_{0}}(r)&=(h_{0}'(r)+g_{0}'(r))(h_{0}'(r)-g_{0}'(r))\\
                &=\left(2-\frac{1+r}{(1-r)^3}\right)\left(2-\frac{1}{2}\frac{1}{(1-r)^2}-\frac{1}{2}\frac{1+4r+r^2}{(1-r)^4}\right)\\
                &=\frac{(1-7r+6r^2-2r^3)_(1-9r+11r^2-8r^3+2r^4)}{(1-r)^7}.
\end{align*}
Note that the roots of the equation \eqref{eq4.1} in $(0,1)$ are decreasing as a function of $\alpha \in [0,1)$. Consequently, $r_{0}(\alpha)\leq r_{0}(0)\approx0.112903$ and as $J_{f_{0}}(r_{0}(0))=0$, therefore in view of Lewy's theorem, the function $f_{0}$ is not univalent in $|z|<r$ if $r>r_{0}(0)$. Also, regarding starlikeness, we observe that
\[\left.\frac{\partial}{\partial\theta}\arg f_{0}(r e^{i\theta})\right|_{\theta=0}=\frac{rh_{0}'(r)-rg_{0}'(r)}{h_{0}(r)+g_{0}(r)}=\frac{1-9r+11r^2-8r^3+2r^4}{(1-r)^2(2r^2-4r+1)},\]
therefore if we set $z=r_{0}$, where $r_{0}$ is the real root of \eqref{eq4.1} in $(0,1)$, then
\[\left.\frac{\partial}{\partial\theta}\arg f_{0}(r e^{i\theta})\right|_{\theta=0,r=r_{0}}=\alpha,\]
showing that the bound $r_{0}$ is best possible.
\end{proof}

An immediate consequence of Theorem \ref{th4.2} gives the following result.
\begin{corollary}\label{cor4.3}
Let $f,g \in \mathcal{K}_{H}^{0}$ and $0\leq \alpha<1$. Then $f*g$ is univalent and fully starlike of order $\alpha$ in  at least $|z|<r_{0}$ where $r_{0}=r_{0}(\alpha)$ is the real root of \eqref{eq4.1} in $(0,1)$. In particular $f*g$ is univalent and fully starlike in $|z|<r_{0}(0)\approx0.129831$.
\end{corollary}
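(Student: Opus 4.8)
The plan is to deduce the statement directly from Theorem~\ref{th4.2} by verifying that the harmonic convolution $f*g$ satisfies the coefficient hypotheses of that theorem. To keep the notation unambiguous, I would write the two convex mappings as $f=h_{1}+\overline{k_{1}}$ and $g=h_{2}+\overline{k_{2}}$, where
\[h_{1}(z)=z+\sum_{n=2}^{\infty}a_{n}z^{n},\quad k_{1}(z)=\sum_{n=2}^{\infty}b_{n}z^{n},\quad h_{2}(z)=z+\sum_{n=2}^{\infty}A_{n}z^{n},\quad k_{2}(z)=\sum_{n=2}^{\infty}B_{n}z^{n}.\]
Since $f,g\in\mathcal{K}_{H}^{0}$, both co-analytic parts lack a linear term, so $b_{1}=B_{1}=0$, and by the Clunie--Sheil-Small bounds \eqref{eq1.4} applied to each factor we have $|a_{n}|,|A_{n}|\leq (n+1)/2$ and $|b_{n}|,|B_{n}|\leq (n-1)/2$ for every $n\geq 2$.

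First I would record the harmonic convolution explicitly, namely
\[(f*g)(z)=(h_{1}*h_{2})(z)+\overline{(k_{1}*k_{2})(z)}=z+\sum_{n=2}^{\infty}a_{n}A_{n}z^{n}+\overline{\sum_{n=2}^{\infty}b_{n}B_{n}z^{n}}.\]
Next I would estimate the coefficients of $f*g$ multiplicatively: combining the two families of bounds above gives
\[|a_{n}A_{n}|=|a_{n}|\,|A_{n}|\leq\left(\frac{n+1}{2}\right)^{2}\quad\mbox{and}\quad|b_{n}B_{n}|=|b_{n}|\,|B_{n}|\leq\left(\frac{n-1}{2}\right)^{2},\quad n\geq 2,\]
which are precisely the coefficient conditions imposed in Theorem~\ref{th4.2}; the case $n=1$ is trivially satisfied since the analytic part is normalized and $b_{1}B_{1}=0$. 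Thus $f*g$ belongs to the family $\mathcal{G}$ to which that theorem applies.

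With membership established, I would simply invoke Theorem~\ref{th4.2}: the convolution $f*g$ is univalent and fully starlike of order $\alpha$ in $|z|<r_{0}$, where $r_{0}=r_{0}(\alpha)$ is the real root of \eqref{eq4.1} in $(0,1)$, and specializing to $\alpha=0$ yields the numerical bound $r_{0}(0)\approx0.129831$. I expect no genuine analytic obstacle here, since the entire content is carried by Theorem~\ref{th4.2}; the only point requiring care is the multiplicativity step, that is, recognizing that the sharp convex coefficient bounds \eqref{eq1.4} are preserved under the Hadamard product precisely as squares. It is worth emphasizing that the resulting radius is only a lower bound, reflected by the phrase ``at least'' in the statement: the function forcing equality in the coefficient bounds of $\mathcal{G}$ (which drives the sharpness in Theorem~\ref{th4.2}) has analytic coefficients $a_{n}=-((n+1)/2)^{2}$, and one cannot in general realize such a function as a convolution $f*g$ of two members of $\mathcal{K}_{H}^{0}$, so $r_{0}$ need not be the exact radius of full starlikeness for the convolution problem itself.
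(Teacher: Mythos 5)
Your proof is correct and is exactly the argument the paper intends: the Clunie--Sheil-Small bounds \eqref{eq1.4} for each factor in $\mathcal{K}_{H}^{0}$ multiply to give the coefficient conditions of Theorem \ref{th4.2}, from which the corollary follows at once (the paper simply calls it ``an immediate consequence'' without writing out the details). No gaps.
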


Invoking Lemma \ref{lem2.2} instead of Lemma \ref{lem2.6} and proceeding in a similar manner as in Theorem \ref{th4.2}, we obtain the following result.
\begin{theorem}\label{th4.4}
Under the hypothesis of Theorem \ref{th4.2}, $f=h+\bar{g}$ is univalent and fully convex of order $\alpha$ in the disk $|z|<s_{0}$, where $s_{0}=s_{0}(\alpha)$ is the real root of the equation
\begin{equation}\label{eq4.2}
2(1-\alpha)(1-r)^5+\alpha(1+r)(1-r)^2-(1+r)(r^2+4r+1)=0
\end{equation}
in the interval $(0,1)$. In particular, $f$ is univalent and fully convex in $|z|<s_{0}(0)\approx0.0712543$.
\end{theorem}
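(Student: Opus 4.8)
The plan is to follow the proof of Theorem~\ref{th4.2} line for line, replacing the starlikeness test of Lemma~\ref{lem2.6} by the convexity test of Lemma~\ref{lem2.2}. First I would note that the hypothesis forces $|b_1|\le((1-1)/2)^2=0$, so $b_1=0$ and $f=h+\bar g$ is harmonic in $\mathbb{D}$. Fixing $r\in(0,1)$, it then suffices to show that the dilation $f_r(z)=f(rz)/r$, whose coefficients are $a_nr^{n-1}$ and $b_nr^{n-1}$, belongs to $\mathcal{FK}_{H}(\alpha)$; univalence is then automatic by the Rad\'o--Kneser--Choquet theorem once full convexity of order $\alpha$ is secured.

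The core of the proof is a single coefficient estimate. Applying Lemma~\ref{lem2.2} to $f_r$ I would form
\[
S=\sum_{n=2}^{\infty}\frac{n(n-\alpha)}{1-\alpha}|a_n|r^{n-1}+\sum_{n=2}^{\infty}\frac{n(n+\alpha)}{1-\alpha}|b_n|r^{n-1},
\]
and substitute $|a_n|\le((n+1)/2)^2$, $|b_n|\le((n-1)/2)^2$. The decisive algebraic step is the identity
\[
n(n-\alpha)(n+1)^2+n(n+\alpha)(n-1)^2=2\bigl[n^4+(1-2\alpha)n^2\bigr],
\]
the convexity analogue of the collapse $(n-\alpha)(n+1)^2+(n+\alpha)(n-1)^2=2[n^3+(1-2\alpha)n]$ from Theorem~\ref{th4.2}; the extra factor $n$ supplied by Lemma~\ref{lem2.2} merely raises $n^3,n$ to $n^4,n^2$. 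Hence
\[
S\le\frac{1}{2(1-\alpha)}\left[\sum_{n=2}^{\infty}n^4r^{n-1}+(1-2\alpha)\sum_{n=2}^{\infty}n^2r^{n-1}\right].
\]

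Next I would demand $S\le1$ and evaluate the two series by means of the fourth- and second-order identities in \eqref{eq3.1}, after deleting their $n=1$ terms. Clearing $(1-r)^5$ turns $S\le1$ into
\[
(1+r)(1+10r+r^2)+(1-2\alpha)(1+r)(1-r)^2\le 4(1-\alpha)(1-r)^5,
\]
and the one-line identity $(1+10r+r^2)+(1-r)^2=2(r^2+4r+1)$ rewrites the left side as $2(1+r)(r^2+4r+1)-2\alpha(1+r)(1-r)^2$; after halving, this is precisely the assertion that the left-hand side of \eqref{eq4.2} is nonnegative. The majorant on the right of the last displayed estimate for $S$ is a power series in $r$ with coefficients $n^2(n^2+1-2\alpha)/(2(1-\alpha))>0$ for $n\ge2$, so it increases strictly from $0$ to $+\infty$ on $(0,1)$; thus $S\le1$ holds precisely for $r\le s_0$, where $s_0$ is the root of \eqref{eq4.2} in $(0,1)$ (a root exists there since the left side of \eqref{eq4.2} equals $1-\alpha>0$ at $r=0$ and $-12$ at $r=1$). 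By Lemma~\ref{lem2.2}, $f_r\in\mathcal{FK}_{H}(\alpha)$ for every $r\le s_0$, which is the assertion; putting $\alpha=0$ in \eqref{eq4.2} isolates the stated value $s_0(0)\approx0.0712543$.

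Since the theorem claims no sharpness, no extremal mapping need be produced, and the entire argument is a mechanical transcription of the proof of Theorem~\ref{th4.2} with the substitution $n^3,n\mapsto n^4,n^2$. The only point requiring care, and the one I would flag as the main obstacle, is verifying that the $\alpha$-dependent terms recombine into precisely \eqref{eq4.2}; the identity $(1+10r+r^2)+(1-r)^2=2(r^2+4r+1)$ makes this recombination transparent and is the crux that aligns the reduced inequality with the stated equation.
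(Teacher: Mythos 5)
Your proposal is correct and is exactly the argument the paper intends: the paper omits the proof of Theorem~\ref{th4.4}, stating only that one invokes Lemma~\ref{lem2.2} in place of Lemma~\ref{lem2.6} and proceeds as in Theorem~\ref{th4.2}, which is precisely what you carry out, and your algebra (the collapse to $2[n^4+(1-2\alpha)n^2]$ and the recombination into \eqref{eq4.2}) checks out.
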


It's worth to remark that the result regarding the univalence of $f$ in Theorem \ref{th4.4} can be further improved to $0.129831$ as seen by Theorem \ref{th4.2}. However, the estimate $s_{0}$ given by \eqref{eq4.2} regarding fully convexity of order $\alpha$ is sharp by considering the function $f_{0}(z)=2z-(L*L)(z)$, where $L$ is given by \eqref{eq1.5}. In fact, as $f_{0}$ has real coefficients, we obtain
\[\left.\frac{\partial}{\partial \theta}\left(\arg \left\{\frac{\partial}{\partial \theta}f_{0}(r e^{i \theta})\right\}\right)\right|_{\theta=0,r=s_{0}}=\frac{1-15s_{0}+15s_{0}^2-21s_{0}^3+10s_{0}^4-2s_{0}^5}{(1-s_{0})^2(1-7s_{0}+6s_{0}^2-2s_{0}^3)}=\alpha.\]
Theorem \ref{th4.4} easily gives
\begin{corollary}\label{cor4.5}
Let $f,g \in \mathcal{K}_{H}^{0}$ and $0\leq \alpha<1$. Then $f*g$ is univalent and fully convex of order $\alpha$ in at least $|z|<s_{0}$ where $s_{0}=s_{0}(\alpha)$ is the real root of \eqref{eq4.2} in $(0,1)$. In particular $f*g$ is univalent and fully convex in $|z|<s_{0}=s_{0}(0)\approx0.0712543$.
\end{corollary}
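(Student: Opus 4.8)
The plan is to deduce Corollary~\ref{cor4.5} directly from Theorem~\ref{th4.4}: the entire content is the observation that the Hadamard product of two functions in $\mathcal{K}_{H}^{0}$ automatically satisfies the coefficient hypotheses of that theorem. To keep the notation unambiguous I would rename the two factors $f_{1}=h_{1}+\overline{g_{1}}$ and $f_{2}=h_{2}+\overline{g_{2}}$, with
\[h_{1}(z)=z+\sum_{n=2}^{\infty}a_{n}z^{n},\quad g_{1}(z)=\sum_{n=1}^{\infty}b_{n}z^{n},\quad h_{2}(z)=z+\sum_{n=2}^{\infty}A_{n}z^{n},\quad g_{2}(z)=\sum_{n=1}^{\infty}B_{n}z^{n}.\]
Since $f_{1},f_{2}\in\mathcal{K}_{H}^{0}\subset\mathcal{K}_{H}$, the Clunie--Sheil-Small estimates \eqref{eq1.4} give $|a_{n}|,|A_{n}|\leq (n+1)/2$ and $|b_{n}|,|B_{n}|\leq (n-1)/2$ for every $n\geq 1$; in particular $b_{1}=B_{1}=0$, so both factors already carry the $\mathcal{S}_{H}^{0}$ normalization.

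The one genuine step is to push these bounds through the convolution. By definition, $f_{1}*f_{2}=(h_{1}*h_{2})+\overline{g_{1}*g_{2}}$ has analytic part $z+\sum_{n\geq 2}a_{n}A_{n}z^{n}$ and co-analytic part $\sum_{n\geq 1}b_{n}B_{n}z^{n}$, and multiplicativity of the modulus yields
\[|a_{n}A_{n}|=|a_{n}|\,|A_{n}|\leq\left(\frac{n+1}{2}\right)^{2},\qquad |b_{n}B_{n}|=|b_{n}|\,|B_{n}|\leq\left(\frac{n-1}{2}\right)^{2},\]
for all $n\geq 1$. These are precisely the coefficient conditions imposed in Theorem~\ref{th4.4}. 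Applying that theorem to $f_{1}*f_{2}$ then shows that $f_{1}*f_{2}$ is univalent and fully convex of order $\alpha$ in $|z|<s_{0}$, where $s_{0}=s_{0}(\alpha)$ is the root of \eqref{eq4.2} in $(0,1)$; specializing to $\alpha=0$ recovers the stated radius $s_{0}(0)\approx 0.0712543$.

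There is essentially no obstacle here: the corollary is a reformulation of Theorem~\ref{th4.4} once one notices that the product of the two linear coefficient bounds of \eqref{eq1.4} is exactly the quadratic bound that Theorem~\ref{th4.4} requires, exactly as Corollary~\ref{cor4.3} is obtained from Theorem~\ref{th4.2} in the fully starlike case. The only care needed is routine bookkeeping: confirming the normalization ($b_{1}=B_{1}=0$), and noting that the phrase ``at least'' in the statement reflects that Theorem~\ref{th4.4} furnishes a universal lower bound valid for every admissible coefficient sequence rather than a radius tailored to the specific product $f_{1}*f_{2}$.
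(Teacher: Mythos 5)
Your proposal is correct and is exactly the argument the paper intends: the Clunie--Sheil-Small bounds \eqref{eq1.4} for each factor in $\mathcal{K}_{H}^{0}$ multiply to give precisely the coefficient hypotheses of Theorem~\ref{th4.4}, which then yields the stated radius $s_{0}(\alpha)$. The paper states the corollary as an immediate consequence of Theorem~\ref{th4.4} without writing out these details, and your bookkeeping (including the normalization $b_{1}=B_{1}=0$) fills them in correctly.
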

It is expected that Corollary \ref{cor4.5} can be further improved, and since the function $L$ given by \eqref{eq1.5} is extremal in $\mathcal{K}_{H}^{0}$ therefore in view of Example \ref{ex4.1} we have the following conjecture:

\textbf{Conjecture C.} If $f, g \in \mathcal{K}_{H}^{0}$, then $f*g$ is univalent and fully convex in $|z|<2-\sqrt{3}$.

Another similar result regarding convolution of analytic functions is that if $f_{1}\in \mathcal{K}$ and $f_{2}\in \mathcal{S}^{*}$, then $f_{1}*f_{2} \in \mathcal{S}^{*}$. To see this, observe that the functions $f_{n}=z+\bar{z}^{n}/n$ $(n\geq 2)$ defined in Example \ref{ex2.5} (with $\alpha=0$) are in $\mathcal{S}_{H}^{*0}$ and
\[(L*f_{n})(z)=z-\frac{n-1}{2n}\bar{z}^{n},\]
where $L$ is defined by \eqref{eq1.5}. Note that $L*f_{n}\in \mathcal{S}_{H}^{*0}$ if and only if $n=2,3$. Indeed, for $|z|=1$, we have
\[
\frac{\partial}{\partial \theta}\arg (L*f_{n})(z)=\RE\frac{z+\frac{n-1}{2}\bar{z}^{n}}{z-\frac{n-1}{2n}\bar{z}^{n}}\geq\frac{n(3-n)}{3n-1}.\]
This observation, together with the fact that $L*f_{n}$ is univalent only if $n=2,3$, it follows that $L*f_{n} \in \mathcal{S}_{H}^{*0}$ for $n=2,3$.

The next theorem follows from an easy modification of the proof of Theorem \ref{th4.2}.
\begin{theorem}\label{th4.6}
Let $h$ and $g$ have the form \eqref{eq1.1}, $0\leq \alpha<1$ and the coefficients of the series satisfy the conditions
\[|a_{n}|\leq\frac{1}{12}(n+1)^2(2n+1)\quad\mbox{and}\quad|b_{n}|\leq\frac{1}{12}(n-1)^2(2n-1),\quad \mbox{for all }n \geq 1.\]
Then $f=h+\bar{g}$ is univalent and fully starlike of order $\alpha$ in the disk $|z|<r_{0}$, where $r_{0}=r_{0}(\alpha)$ is the real root of the equation
\begin{equation}\label{eq4.3}
12(1-\alpha)(1-r)^5+\alpha(r^2+3r+6)(1-r)^2-6(1+r)^3=0
\end{equation}
in the interval $(0,1)$. In particular, $f$ is univalent and fully starlike in $|z|<r_{0}$, where $r_{0}=r_{0}(0)\approx0.0855165$ is the root of the equation $2r^5-10r^4+21r^3-17r^2+13r-1=0$. Moreover, the result is sharp.
\end{theorem}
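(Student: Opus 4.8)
The plan is to follow the template established by Theorem \ref{th3.1} and Theorem \ref{th4.2} almost verbatim, since the only change is the particular form of the coefficient bounds. First I would observe that the hypothesized bounds force $b_1=0$ and ensure $h$ and $g$ are analytic in $\mathbb{D}$, so $f=h+\bar{g}$ is harmonic; it then suffices to show that the rescaled map $f_r(z)=f(rz)/r$ lies in $\mathcal{FS}^{*}_{H}(\alpha)$ for each $r\leq r_0$. I would form the sum $S$ as in \eqref{eq3.4}, substitute the coefficient bounds $|a_n|\leq\frac{1}{12}(n+1)^2(2n+1)$ and $|b_n|\leq\frac{1}{12}(n-1)^2(2n-1)$, and simplify. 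The key computation is that the combination
\[
\frac{n-\alpha}{1-\alpha}\,\frac{(n+1)^2(2n+1)}{12}+\frac{n+\alpha}{1-\alpha}\,\frac{(n-1)^2(2n-1)}{12}
\]
collapses, after expanding in powers of $n$, into a manageable polynomial in $n$ whose even/odd symmetry cancels the $\alpha$-dependent cross terms; I expect the surviving expression to be a linear combination of $n^4$, $n^3$, $n^2$, $n$ (multiplied by $r^{n-1}$), with the $\alpha$ appearing only through a single lower-order term. This is the step that differs from the earlier theorems—here the degree of the coefficient bound is one higher, so the summation will require the identity $\sum n^4 r^n = r(1+r)(1+10r+r^2)/(1-r)^5$ from \eqref{eq3.1}, which is exactly why equation \eqref{eq4.3} has a fifth-degree leading term rather than a fourth.

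Having reduced $S$ to a closed form via the identities \eqref{eq3.1}, I would invoke Lemma \ref{lem2.6}: the requirement $S\leq 1$ becomes a rational inequality in $r$, and clearing the denominator $(1-r)^5$ yields precisely $12(1-\alpha)(1-r)^5+\alpha(r^2+3r+6)(1-r)^2-6(1+r)^3\geq 0$, so that $f_r\in\mathcal{FS}^{*}_{H}(\alpha)$ for $r$ up to the real root $r_0(\alpha)$ of \eqref{eq4.3} in $(0,1)$. Specializing $\alpha=0$ reduces \eqref{eq4.3} to $2r^5-10r^4+21r^3-17r^2+13r-1=0$ with root $r_0(0)\approx 0.0855165$, and univalence follows from fully starlikeness together with \cite[Theorem 1]{mocanu}.

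For sharpness I would take the extremal function $f_0=h_0+\overline{g_0}$ with all coefficient bounds attained with equality and real, namely the function whose Taylor coefficients are $-\frac{1}{12}(n+1)^2(2n+1)$ and $\frac{1}{12}(n-1)^2(2n-1)$; concretely $f_0(z)=2z-\frac{z-\frac12 z^2+\frac16 z^3}{(1-z)^3}*\!\!\text{(appropriate kernel)}$ constructed so that $J_{f_0}(r_0(0))=0$, exhibiting loss of univalence for $r>r_0(0)$ by Lewy's theorem. I would then verify that at $\theta=0$, $r=r_0(\alpha)$ the boundary derivative $\left.\frac{\partial}{\partial\theta}\arg f_0(re^{i\theta})\right|_{\theta=0}$ equals exactly $\alpha$, since for real-coefficient maps this derivative reduces to $(rh_0'(r)-rg_0'(r))/(h_0(r)+g_0(r))$, and the defining equation \eqref{eq4.3} is precisely the condition that makes this quotient equal $\alpha$. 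The main obstacle is purely computational: correctly carrying out the weighted summation with the higher-degree coefficient bounds so that the $\alpha$-terms combine into the single factor $(r^2+3r+6)(1-r)^2$ rather than scattering across several powers of $(1-r)$; the monotonicity of $r_0(\alpha)$ in $\alpha$ (needed to guarantee $r_0(\alpha)\leq r_0(0)$ in the sharpness argument) should follow from the same implicit-differentiation observation used in Theorems \ref{th3.1} and \ref{th4.2}.
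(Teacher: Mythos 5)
Your proposal is correct and follows essentially the same route as the paper, which itself derives Theorem \ref{th4.6} as "an easy modification" of Theorem \ref{th4.2}: bound the sum $S$ of \eqref{eq3.4} using the given coefficient estimates (the weighted combination collapses to $\tfrac{1}{12}\bigl[4n^4+8n^2-2\alpha(5n^2+1)\bigr]$, so the $\alpha$-part actually survives in two terms rather than one, but it still assembles into the factor $\alpha(r^2+3r+6)(1-r)^2$ exactly as you predict), apply Lemma \ref{lem2.6} with the $\sum n^4r^n$ identity from \eqref{eq3.1}, and establish sharpness via $f_0=2z-(L*K)(z)$, whose Jacobian vanishes at $r_0(0)$ and whose angular derivative of $\arg f_0$ at $\theta=0$, $r=r_0(\alpha)$ equals $\alpha$. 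No gaps.
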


To prove the sharpness of the result in Theorem \ref{th4.6}, we consider the function
\[f_{0}(z)=2z-(L*K)(z)=h_{0}(z)+\overline{g_{0}(z)},\]
where
\[h_{0}(z)=2z-\left(\frac{1}{4}\frac{z+\frac{1}{3}z^3}{(1-z)^3}+\frac{1}{4}\frac{z}{(1-z)^2}+\frac{1}{4}\frac{z(1+z)^2}{(1-z)^4}+\frac{1}{4}\frac{z(1+z)}{(1-z)^3}\right)\]
and
\[g_{0}(z)=-\left(\frac{1}{4}\frac{z+\frac{1}{3}z^3}{(1-z)^3}-\frac{1}{4}\frac{z}{(1-z)^2}-\frac{1}{4}\frac{z(1+z)^2}{(1-z)^4}+\frac{1}{4}\frac{z(1+z)}{(1-z)^3}\right).\]
Note that
\[f_{0}(z)=z-\sum_{n=2}^{\infty}\frac{1}{6}(n+1)^2(2n+1)z^n+\overline{\sum_{n=2}^{\infty}\frac{1}{6}(n-1)^2(2n-1)z^n}\]
and for $r \in(0,1)$, the Jacobian of $f_{0}$ is given by
\[J_{f_{0}}(r)=\frac{(1-13r+17r^2-21r^3+10r^4-2r^5)(1-11r+11r^2-8r^3+2r^4)}{(1-r)^9}\]
so that $J_{f_{0}}(r_{0}(0))=0$. Regarding starlikeness, observe that
\[\left.\frac{\partial}{\partial\theta}\arg f_{0}(r e^{i\theta})\right|_{\theta=0,r=r_{0}}=\frac{6(1-13r_{0}+17r^2-21r_{0}^3+10r_{0}^4-2r_{0}^5)}{(1-r_{0})^2(6-39r_{0}+35r_{0}^2-12r_{0}^3)}=\alpha,\]
where $r_{0}$ is the real root of \eqref{eq4.3} in $(0,1)$. Theorem \ref{th4.6} gives

\begin{corollary}
If $f \in \mathcal{S}_{H}^{*0}$ and $g \in \mathcal{K}_{H}^{0}$, then $f *g$ is univalent and fully starlike of order $\alpha$ in the disk $|z|<r_{0}$, where $r_{0}=r_{0}(\alpha)$ is the real root of \eqref{eq4.3} in $(0,1)$.
\end{corollary}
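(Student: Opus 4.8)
The statement to prove is Theorem~\ref{th4.6}: given a function $f=h+\bar g$ whose Taylor coefficients satisfy $|a_n|\le\frac1{12}(n+1)^2(2n+1)$ and $|b_n|\le\frac1{12}(n-1)^2(2n-1)$, then $f$ is univalent and fully starlike of order $\alpha$ in $|z|<r_0$, where $r_0=r_0(\alpha)$ is the relevant root of \eqref{eq4.3}. The plan is to follow verbatim the template established in Theorem~\ref{th3.1} and Theorem~\ref{th4.2}: these coefficient bounds are exactly what one obtains by termwise-multiplying the bounds \eqref{eq1.4} for $\mathcal K_H^0$ with the bounds \eqref{eq1.2} for $\mathcal S_H^{*0}$ (since $\frac{n+1}{2}\cdot\frac{(n+1)(2n+1)}{6}=\frac{(n+1)^2(2n+1)}{12}$), so the same machinery applies.

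\medskip\noindent\emph{Main argument.} As before, the coefficient bounds force $b_1=0$, so $h,g$ are genuine power series of the form \eqref{eq1.1} and $f=h+\bar g$ is harmonic in $\mathbb D$. Fix $0<r<1$ and set $f_r(z)=f(rz)/r$ as in \eqref{eq3.3}. It suffices to show $f_r\in\mathcal{FS}^*_H(\alpha)$, for which I invoke Jahangiri's sufficient condition, Lemma~\ref{lem2.6}, applied to the coefficients $a_n r^{n-1}$ and $b_n r^{n-1}$ of $f_r$. Forming the sum $S$ as in \eqref{eq3.4} and inserting the coefficient bounds, I estimate
\[
S\le\frac{1}{12(1-\alpha)}\left[\sum_{n=2}^{\infty}(n-\alpha)(n+1)^2(2n+1)r^{n-1}+\sum_{n=2}^{\infty}(n+\alpha)(n-1)^2(2n-1)r^{n-1}\right].
\]
The combinatorial heart of the proof is to expand the two bracketed summands as a polynomial in $n$ and collect powers $n^k$; after adding the two series the $\alpha$-independent part should organize itself so that Lemma~\ref{lem2.6} reduces to the requirement $S\le1$, i.e. to an inequality of the form
\[
6\sum_{n=2}^{\infty}\sigma(n)\,r^{n-1}-\alpha\sum_{n=2}^{\infty}\tau(n)\,r^{n-1}\le 12(1-\alpha)
\]
for explicit polynomials $\sigma,\tau$. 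Summing each $\sum n^k r^{n-1}$ in closed form via the identities \eqref{eq3.1} and clearing the denominator $(1-r)^5$ must produce precisely the polynomial on the left of \eqref{eq4.3}, giving $S\le1\iff 12(1-\alpha)(1-r)^5+\alpha(r^2+3r+6)(1-r)^2-6(1+r)^3\ge0$. Hence $f_r\in\mathcal{FS}^*_H(\alpha)$ for all $r\le r_0$, proving univalence and full starlikeness of order $\alpha$ in $|z|<r_0$.

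\medskip\noindent\emph{Sharpness.} For the extremal function I take $f_0(z)=2z-(L*K)(z)=h_0+\overline{g_0}$ displayed after the theorem statement, whose coefficients attain the stated bounds with opposite signs so that $f_0(z)=z-\sum\frac16(n+1)^2(2n+1)z^n+\overline{\sum\frac16(n-1)^2(2n-1)z^n}$. Since $f_0$ has real coefficients, I compute $J_{f_0}(r)=(h_0'(r)+g_0'(r))(h_0'(r)-g_0'(r))$ in closed form using \eqref{eq3.1}; the factorization exhibited shows $J_{f_0}(r_0(0))=0$, so Lewy's theorem denies univalence just past $r_0(0)$. Finally, monotonicity of $r_0(\alpha)$ in $\alpha$ (the roots of \eqref{eq4.3} decrease in $\alpha$, so $r_0(\alpha)\le r_0(0)$) together with the boundary evaluation $\left.\frac{\partial}{\partial\theta}\arg f_0(re^{i\theta})\right|_{\theta=0}=\frac{rh_0'(r)-rg_0'(r)}{h_0(r)+g_0(r)}$ reducing to $\alpha$ exactly at $r=r_0$ certifies that the bound is best possible.

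\medskip\noindent\emph{Expected obstacle.} Everything is structurally routine except the bookkeeping in the central estimate: the two cubic-times-linear polynomials in $n$ must be expanded, combined, summed against \eqref{eq3.1}, and the result cleared over $(1-r)^5$ to match \eqref{eq4.3} exactly. The danger is an arithmetic slip in collecting the coefficients of $n^3,n^2,n^1,n^0$ (and the $\alpha$-terms), and in verifying that the closed-form sum genuinely factors to reproduce the polynomial $12(1-\alpha)(1-r)^5+\alpha(r^2+3r+6)(1-r)^2-6(1+r)^3$ rather than some off-by-a-constant variant; a parallel check against the Jacobian factorization $(1-13r+17r^2-21r^3+10r^4-2r^5)(1-11r+11r^2-8r^3+2r^4)$ and the boundary-derivative identity provides a useful internal consistency test.
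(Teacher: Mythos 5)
Your proposal is correct and follows the paper's route exactly: the corollary is deduced by observing that the coefficients of $f*g$ satisfy the termwise product of the bounds \eqref{eq1.2} (valid for $f\in\mathcal{S}_{H}^{*0}$ by Sheil-Small's verification of the coefficient conjecture for starlike mappings) with the bounds \eqref{eq1.4} for $g\in\mathcal{K}_{H}^{0}$, after which Theorem \ref{th4.6} applies, and your sketch of Theorem \ref{th4.6} itself is the same Lemma \ref{lem2.6} argument used in Theorems \ref{th3.1} and \ref{th4.2}. The computation you flag as the main obstacle does check out: $(n-\alpha)(n+1)^2(2n+1)+(n+\alpha)(n-1)^2(2n-1)=4n^4+(8-10\alpha)n^2-2\alpha$, and summing this against the identities \eqref{eq3.1} and clearing the denominator $(1-r)^5$ reproduces \eqref{eq4.3} exactly.
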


\end{document}